\newcommand{\G}{\mathbf{G}}
\newcommand{\Z}{\mathbf{Z}}
\newcommand{\cF}{\mathcal{F}}
\newcommand{\cG}{\mathcal{G}}
\newcommand{\cK}{\mathcal{K}}
\newcommand{\cO}{\mathcal{O}}
\newcommand{\cT}{\mathcal{T}}
\newcommand{\CH}{\mathrm{CH}}
\renewcommand{\d}{\mathrm{d}}
\newcommand{\ev}{\mathrm{ev}}
\newcommand{\gr}{\mathrm{gr}}
\newcommand{\Hom}{\mathrm{Hom}}
\newcommand{\id}{\mathrm{id}}
\newcommand{\Map}{\mathrm{Map}}
\newcommand{\Pic}{\mathrm{Pic}}
\newcommand{\pt}{\mathrm{pt}}
\newcommand{\Spec}{\mathrm{Spec}}
\newcommand{\Sym}{\mathrm{Sym}}
\newcommand{\B}{\mathrm{B}}
\newcommand{\mC}{\mathrm{C}}
\newcommand{\mH}{\mathrm{H}}
\newcommand{\K}{\mathrm{K}}
\renewcommand{\L}{\mathrm{L}}
\newcommand{\mA}{\mathrm{A}}
\newcommand{\mB}{\mathrm{B}}
\newcommand{\mD}{\mathrm{D}}
\newcommand{\mE}{\mathrm{E}}
\newcommand{\mF}{\mathrm{F}}
\newcommand{\mG}{\mathrm{G}}
\newcommand{\BDG}{\mathrm{BDG}}
\newcommand{\llpar}{(\!(}
\newcommand{\rrpar}{)\!)}
\newtheorem{lm}{Lemma}[section]
\newtheorem{prop}{Proposition}[section]
\newtheorem{thm}{Theorem}[section]
\newtheorem*{thm*}{Theorem}
\theoremstyle{remark}
\newtheorem*{remark}{Remark}
\newcommand{\proofstep}[1]{%
  \par
  \addvspace{\medskipamount}
  \textit{#1\@addpunct{.}}\enspace\ignorespaces
}
\begin{document}
\title{Gerbal central extensions of reductive groups by $\cK_3$}
\address{Mathematical Institute, Radcliffe Observatory Quarter, Woodstock
Road, Oxford, OX2 6GG, UK}
\email{safronov@maths.ox.ac.uk}
\author{Pavel Safronov}
\begin{abstract}
We classify central extensions of a reductive group $G$ by $\cK_3$ and $\B\cK_3$, the sheaf of third Quillen $\K$-theory groups and its classifying stack. These turn out to be parametrized by the group of Weyl-invariant quadratic forms on the cocharacter lattice valued in $k^\times$ and the group of integral Weyl-invariant cubic forms on the cocharacter lattice respectively.
\end{abstract}
\keywords{K-cohomology, central extensions, gerbes}
\maketitle

\section*{Introduction}

Let $G$ be a reductive group over a field and let $\L G=G \llpar t\rrpar$ be its loop group. In representation theory one is often interested in central extensions of $\L G$. A central extension of $\L G$ by $\G_m$ is the same as a multiplicative torsor \cite[Exp. 7, Section 1]{SGA7} on $\L G$, i.e. a torsor $\cT$ together with an identification $\cT_{xy}\cong \cT_x\otimes \cT_y$ for any two points $x,y\in\L G$. Given a geometric object on the loop space $\L G$ one can wonder whether it comes from the group $G$ itself as $G$ is a much easier geometric object than $\L G$: it is an affine scheme of finite type in contrast to $\L G$ being an ind-scheme.

Indeed, there is a transgression map \cite{KV}
\[\mH^1(G, \cK_2)\rightarrow \mH^1(\L G, \cO^\times),\]
where $\cK_2$ is the sheaf of second Quillen $\K$-theory groups, which sends a $\cK_2$-torsor on $G$ to a $\G_m$-torsor on $\L G$. In fact, it behaves well with respect to multiplicative structures and so it sends a $\cK_2$-central extension of $G$ to a $\G_m$-central extension of $\L G$.

Thus, one is naturally led to consider $\cK_2$-central extensions of $G$. Such a classification has been performed by Esnault--Kahn--Levine--Viehweg \cite{EKLV} and Brylinski--Deligne \cite{BrD}.

Brylinski and Deligne construct a spectral sequence which computes the $\cK_n$-cohomology of $G$ from the $\cK_n$-cohomology of its maximal torus, a much simpler task. Another spectral sequence allows one to classify $\cK_n$-central extensions of $G$ from the knowledge of $\cK_n$-cohomology of $G$.

For $n=2$ both spectral sequences degenerate after the first nontrivial page. This allowed Brylinski and Deligne to obtain the following classification:
\begin{thm*}[Brylinski--Deligne]
Let $G$ be a split connected reductive group over a field with a simply-connected derived group. Then the group of $\cK_2$-central extensions of $G$ is isomorphic to the group of Weyl-invariant quadratic forms on the cocharacter lattice.
\end{thm*}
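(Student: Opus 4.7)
The plan is to follow the two-spectral-sequence strategy sketched in the introduction: compute the low-degree $\cK_2$-cohomology of $G$ by descent from the maximal torus $T$, then translate that cohomology into central extensions using the fact that multiplicative $\cK_2$-torsors form a Picard groupoid whose homotopy groups are expressible in terms of $\mH^{*}(G, \cK_2)$.

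\emph{Torus case.} For a split torus $T$ with cocharacter lattice $\Lambda$ I would first compute $\mH^{p}(T, \cK_2) = \wedge^{p} \Lambda \otimes \K_{2-p}(k)$ by iterating Quillen's localization for $\G_m$; in particular $\mH^1(T, \cK_2) = \Lambda \otimes k^\times$ and $\mH^2(T, \cK_2) = \wedge^2 \Lambda$. Given a $\cK_2$-central extension of $T$, the commutator pairing supplies an antisymmetric bilinear form on $\Lambda$, while pullback along a cocharacter $\lambda : \G_m \to T$ produces, via the identification of central extensions of $\G_m$ by $\cK_2$ with $\Z$, an integer $Q(\lambda)$. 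A direct check shows that $Q$ is quadratic with polarization equal to the commutator pairing, and that this assignment is an isomorphism onto the group of $\Z$-valued quadratic forms on $\Lambda$.

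\emph{Descent to $G$.} I would exploit the Bruhat decomposition $G = \bigsqcup_{w \in W} BwB$ together with $\mathbb{A}^1$-homotopy invariance of $\cK_n$-cohomology in low degrees, so that the associated filtration spectral sequence has $E_1$-page assembled from $\mH^{*}(T, \cK_2)$ indexed by Weyl group elements. In the relevant degrees the restriction map then identifies $\mH^{*}(G, \cK_2)$ with the Weyl-invariant part of $\mH^{*}(T, \cK_2)$, so that the quadratic form attached to an extension of $T$ arising by restriction from $G$ is forced to be Weyl-invariant.

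\emph{Construction and main obstacle.} Conversely, to produce an extension from a Weyl-invariant integer-valued quadratic form $Q$ on $\Lambda$, I would build it by assembling the canonical Steinberg $\cK_2$-central extension of each root $\mathrm{SL}_2$-subgroup (scaled by $Q$ evaluated at the coroot) and glueing these along the Bruhat stratification. The main obstacle is to verify consistency of this glueing against the defining relations of $G$; the hypothesis that $[G,G]$ is simply connected enters precisely here, killing the potential $\mH^2$-obstructions and forcing degeneration of the second spectral sequence, so that every Weyl-invariant $Q$ indeed lifts to a $\cK_2$-central extension, unique up to isomorphism.
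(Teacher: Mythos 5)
Your overall architecture (torus first, then Bruhat descent, then the multiplicative/classifying-space step) matches the paper's, but the proposal rests on a concretely false computation. For a split torus the higher Zariski cohomology of $\cK_n$ \emph{vanishes}: $\mH^p(T,\cK_2)=0$ for $p>0$ (this is the Brylinski--Deligne lemma quoted at the start of Section \ref{sect:kcohomologytori}). The groups $\bigwedge^p\Lambda\otimes\K_{2-p}(k)$ that you obtain "by iterating Quillen's localization" are the associated graded pieces of the filtration $V$ on $\mH^0(T,\cK_2)$, not cohomology in degree $p$; already for $\G_m$ your formula predicts $\mH^1(\G_m,\cK_2)=k^\times$ whereas it is $0$. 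This error propagates into the descent step: $\mH^1(G,\cK_2)$ cannot be "the Weyl-invariant part of $\mH^1(T,\cK_2)$", since the latter is zero while the former is the (generally nonzero) group of $W$-invariant quadratic forms. In the actual argument the Bruhat stratification produces a Cousin complex whose terms are graded pieces of $\mH^0$ of tori indexed by $W^{(p)}$, with differential given by contraction with coroots, and the quadratic forms arise as the middle cohomology of $\bigwedge^2X\to\oplus_{W^{(1)}}X\to\oplus_{W^{(2)}}\Z$ --- a subquotient spread over several strata, not a $W$-invariant subspace of anything attached to $T$ alone. Simple-connectedness enters here, to identify $\oplus_{W^{(1)}}\Z$ with $X$ and kill the remaining $E_1$ terms; it is not an "$\mH^2$-obstruction" statement.

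The final step is also asserted rather than proved. Gluing Steinberg extensions of root $\mathrm{SL}_2$'s along the Bruhat stratification is not carried out, and in the paper surjectivity onto $W$-invariant quadratic forms is not obtained that way: it falls out of the degeneration at $E_2$ of the spectral sequence for $\mH^\bullet(\B_\bullet G,\cK_2)$, whose two essential inputs are (i) the additivity $\mH^1(G^{\times n},\cK_2)\cong\mH^1(G,\cK_2)^{\oplus n}$, valid precisely because there are no nonzero $W$-invariant \emph{linear} forms on $Y_{der}$, and (ii) the Esnault--Kahn--Levine--Viehweg computation of the cohomology of the bottom row, $\mH^m\mH^0(\B_\bullet H_0,\cK_2)\cong\Sym^m(X_0)\otimes\K_{2-m}(k)$. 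Neither point appears in your outline, and without them you have neither injectivity nor surjectivity of the comparison between central extensions of $G$ and $W$-invariant quadratic forms. Your description of the torus classification (quadratic form from restriction along cocharacters, commutator pairing as polarization) is correct in substance over a field and could be salvaged, but as written the proof has genuine gaps at both the cohomological computation and the extension-theoretic step.
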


Studying double loop groups $\L^2 G=G\llpar t\rrpar\llpar s\rrpar$, one naturally encounters not ordinary central extensions, but gerbal extensions, i.e. central extensions by the commutative group stack $\B\G_m$ \cite{AK}, \cite{FZ}. Since we have to do the transgression map twice, $\G_m$-central extensions of $\L^2 G$ come from $\cK_3$-central extensions of $G$. The goal of this paper is to classify such extensions. We obtain the following result (Theorem \ref{thm:main}).
\begin{thm*}
Let $G$ be a split connected reductive group over a field $k$ with a simply-connected derived group. Then the group of $\cK_3$-central extensions of $G$ is isomorphic to the group of Weyl-invariant quadratic forms on the cocharacter lattice valued in $k^\times$. The group of gerbal central extensions of $G$ by $\cK_3$ is isomorphic to the group of integral Weyl-invariant cubic forms on the cocharacter lattice.
\end{thm*}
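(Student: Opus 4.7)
The overall strategy is to imitate Brylinski--Deligne's argument, one $K$-theoretic level up. A $\cK_3$-central extension of $G$ is a multiplicative $\cK_3$-torsor on $G$, classified by $\mH^2(\B G,\cK_3)$, while a gerbal $\cK_3$-extension is a multiplicative $\cK_3$-gerbe, classified by $\mH^3(\B G,\cK_3)$. The task thus reduces to computing these two cohomology groups and producing explicit representatives.

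The plan is to run two Brylinski--Deligne-style spectral sequences in parallel. The first computes $\mH^*(\B G,\cK_3)$ from the $\cK_3$-cohomology of $\B T$ with its Weyl action, and the second converts the output into equivalence classes of multiplicative $\cK_3$-torsors or gerbes. Under the simply-connected derived group hypothesis, I expect restriction to yield isomorphisms
\[
\mH^p(\B G,\cK_3)\cong\mH^p(\B T,\cK_3)^W\qquad(p=2,3).
\]

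For the torus, write $T=\G_m^r$ and use the Künneth formula together with the fact that $\mH^*(\B\G_m,\cK_\bullet)$ is generated over Milnor $K$-theory by the tautological Chern class $c_1\in\mH^1(\B\G_m,\cK_1)$. This yields
\[
\mH^2(\B T,\cK_3)\cong\Sym^2(\Lambda^*)\otimes k^\times,\qquad\mH^3(\B T,\cK_3)\cong\Sym^3(\Lambda^*),
\]
where $\Lambda^*$ denotes the character lattice. After taking $W$-invariants these match, respectively, the Weyl-invariant quadratic forms on the cocharacter lattice valued in $k^\times$ and the integral Weyl-invariant cubic forms from the statement. Explicit central and gerbal extensions realizing each class should come from tautological constructions on $T$ that are Weyl-equivariant and hence descend to $G$.

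The main technical obstacle is the degeneration of both spectral sequences at $\cK_3$. At $\cK_2$ they both degenerate immediately after the first nontrivial page, but at $\cK_3$ the cohomology of $T$ has strictly more nonzero rows, so nontrivial higher differentials can appear a priori; the simply-connected derived group hypothesis must be used crucially to force them to vanish. Secondarily, one has to be careful about the precise integrality normalization of the cubic form attached to a gerbal extension, distinguishing $\Sym^3(\Lambda^*)$ from the potentially larger group of integer-valued cubic polynomial functions on $\Lambda$, and check which of these two is realized by the cohomology.
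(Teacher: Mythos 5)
There is a genuine gap: your entire computation rests on the asserted isomorphism $\mH^p(\B_\bullet G,\cK_3)\cong \mH^p(\B_\bullet T,\cK_3)^W$ for $p=2,3$, and you never indicate how to prove it. This is not a formal ``splitting principle'': the paper's own results show it \emph{fails} one degree up. Indeed, $\mH^4(\B_\bullet T,\cK_3)\cong \Sym^4(X)\otimes \K_{-1}(k)=0$ by the Esnault--Kahn--Levine--Viehweg computation, whereas $\mH^4(\B_\bullet G,\cK_3)\cong(\Z/2\Z)^{n_{\BDG}}$ is nonzero already for $G=\mG_2$ or $\mathrm{Spin}_7$. So restriction to the torus cannot be an isomorphism in general, and whatever makes it work in degrees $2$ and $3$ is exactly the content that has to be supplied. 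Relatedly, your suggestion that Weyl-equivariant constructions on $T$ ``descend'' to $G$ goes the wrong way: $T\hookrightarrow G$ is an inclusion, so one must \emph{extend} classes from $T$ to $G$, which is the hard surjectivity statement, not a descent.

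The paper's route is quite different and is where all the work lives. It first computes $\mH^q(G,\cK_3)$ for $q=0,\dots,3$ (not $\mH^\bullet(\B_\bullet T,\cK_3)$) via the Brylinski--Deligne spectral sequence attached to the Bruhat stratification, whose $E_0$ terms are sums over Weyl-group elements of $\bigwedge^{-2p-q}X\otimes\K_{3+p}(k)$ with differentials given by contraction with coroots; proving $E_1$-degeneration requires the explicit root-system case analysis showing $E_1^{-3,4}=0$, $E_1^{-3,5}\cong$ $W$-invariant cubic forms, and $E_1^{-3,6}\cong(\Z/2\Z)^{n_{\BDG}}$. Only then does one run the bar spectral sequence $E_1^{p,q}=\mH^q(G^{\times p},\cK_3)\Rightarrow \mH^{p+q}(\B_\bullet G,\cK_3)$, using additivity of the rows (which hinges on the absence of $W$-invariant linear forms on $Y_{der}$) and an Eilenberg--Zilber argument for the quadratic-linear part, and one must still kill the $E_2$ differential $E_2^{1,1}\to\Sym^3(X_0)$, which the paper does by injectivity of the pullback $\mH^\bullet(\B_\bullet H_0,\cK_3)\to\mH^\bullet(\B_\bullet G,\cK_3)$. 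You correctly identify that degeneration is the main obstacle, but you offer no mechanism for it, and the mechanism is not the one your setup suggests. Your final caveat about $\Sym^3(X)$ versus integer-valued cubic forms is a reasonable thing to worry about, but it is a side issue compared to the missing comparison between $\B_\bullet G$ and $\B_\bullet T$.
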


We show that using our methods one can compute $\CH^3(G)$, the third Chow group of codimension 3 cycles on the group $G$. It turns out to be isomorphic to $(\Z/2\Z)^{n_{\BDG}}$, where $n_{\BDG}$ is the number of times a $\mG_2$, $\mB_3$ or $\mD_4$ subdiagram appears in the Dynkin diagram of $G$. Thus, $\CH^3(G)$ is 2-torsion unless $G$ only contains type $\mA$ and $\mC$ subgroups in which case it is trivial. Note that one has $\CH^1(G) \equiv \Pic(G) = 0$ and $\CH^2(G) = 0$ for all $G$ satisfying our assumptions. The full Chow ring of algebraic groups was known before due to results of Borel, Chevalley, Grothendieck \cite{Gr}, Kac \cite{Kac}, Marlin \cite{Mar}, Kaji--Nakagawa \cite{KN} and many others.

We have concentrated on the simply-connected case for simplicity; the general case has both combinatorial difficulties and the $E_1$ degeneration of the Brylinski--Deligne spectral sequence is not immediate. The results generalize to split groups over more general regular bases over a field, but a generalization to the non-split case is unclear. Our statements could be useful for a partial computation of $\cK$-cohomology $\mH^\bullet(G, \cK_n)$ for $n > 3$.

The paper is organized in the following way. We begin with a recollection on $\cK$-cohomology of tori computed in \cite{EKLV} and \cite{BrD}. We recall a classification of $\cK_2$-classes on tori via alternating forms together with quadratic refinements and prove a version of this classification for $\cK_3$-classes on tori. Section \ref{sect:kcohomologysemisimple} contains the core technical results. We recall the spectral sequence introduced by Brylinski and Deligne for the computation of $H^\bullet(G, \cK_n)$ and compute it for $n=2,3$ for a semisimple group $G$. The spectral sequence degenerates at $E_1$ in both cases. In Section \ref{sect:kcohomologyreductive} we combine the results of Sections \ref{sect:kcohomologytori} and \ref{sect:kcohomologysemisimple} to obtain $\cK_3$-cohomology of reductive groups $G$. In the final section we use these results to obtain a classification of multiplicative $\cK_3$-torsors and gerbes on the group and thus the corresponding central extensions. We end with an explanation of the relation between $\cK_n$-extensions of $G$ and $\G_m$-extensions of its iterated loop groups.

\subsection*{Acknowledgements}
The author would like to thank Xinwen Zhu for a useful conversation, Burt Totaro for some comments on the draft and the referee for improving the exposition.

\section{$\cK$-cohomology of tori}
\label{sect:kcohomologytori}

Let $H$ be a split torus over a field $k$. We denote by $X=\Hom(H, \G_m)$ its group of characters and by $Y=\Hom(\G_m, H)$ the dual group of cocharacters. Let $\cK_n$ be the Zariski sheafification of the presheaf of Quillen $\K$-theory groups. For instance, $\cK_1\cong\cO^\times$ and we denote by $\{f\}$ the element of $\cK_1$ corresponding to the invertible function $f\in \cO^\times$. The sheaves $\cK_\bullet$ form a graded ring and we denote the multiplication in $\K$-theory by a period.

We recall a computation of the $\cK$-cohomology of $H$ due to Brylinski and Deligne \cite[Lemma 3.3.1]{BrD}:

\begin{prop}[Brylinski--Deligne]
The graded ring $\mH^0(H, \cK_\bullet)$ is generated by the graded ring $\K_\bullet(k)$ and $X$ in degree 1 subject to the following relations:
\begin{enumerate}
\item $X\rightarrow \mH^0(H, \cK_1)$ is additive,
\item $x.x = x.\{-1\}$ for every $x\in X$.
\end{enumerate}

Moreover, the higher cohomology vanishes.
\end{prop}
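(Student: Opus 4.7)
The plan is to induct on the rank $r$ of $H$. The case $r = 0$ is immediate, since $H = \Spec k$ has trivial character lattice and $\mH^\bullet(\Spec k, \cK_n) = \K_n(k)$ concentrated in degree zero.

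For the inductive step, write $H = H' \times \G_m$ and denote by $t$ the coordinate on the second factor, regarded also as a character of $H$; let $\pi \colon H \to H'$ be the projection. The tool is the Gersten/localization sequence for the open embedding $H \hookrightarrow H' \times \mathbb{A}^1$ with closed complement $Z = H' \times \{0\} \cong H'$. Purity identifies $\mH^p_Z(H' \times \mathbb{A}^1, \cK_n) \cong \mH^{p-1}(H', \cK_{n-1})$ and homotopy invariance identifies $\mH^p(H' \times \mathbb{A}^1, \cK_n) \cong \mH^p(H', \cK_n)$, producing the long exact sequence
\[\cdots \to \mH^{p-1}(H', \cK_{n-1}) \to \mH^p(H', \cK_n) \xrightarrow{\pi^*} \mH^p(H, \cK_n) \xrightarrow{\partial} \mH^p(H', \cK_{n-1}) \to \cdots\]
The tame symbol $\partial$ admits the explicit section $a \mapsto \{t\} \cdot \pi^*(a)$, so the long exact sequence breaks into split short exact sequences
\[0 \to \mH^p(H', \cK_n) \xrightarrow{\pi^*} \mH^p(H, \cK_n) \xrightarrow{\partial} \mH^p(H', \cK_{n-1}) \to 0.\]
Vanishing of higher cohomology then propagates from $H'$ to $H$, and in degree zero we obtain $\mH^0(H, \cK_\bullet) \cong \pi^* \mH^0(H', \cK_\bullet) \oplus \{t\} \cdot \pi^* \mH^0(H', \cK_{\bullet-1})$, so as a ring $\mH^0(H, \cK_\bullet)$ is generated by the image of $\mH^0(H', \cK_\bullet)$ and the symbol $\{t\}$.

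What remains is to pin down the relations. Additivity of $X \to \mH^0(H, \cK_1) = \cO^\times(H)$ is automatic from $\{ab\} = \{a\} + \{b\}$. The quadratic relation is the standard Milnor K-theory identity: from $\{t, -t\} = 0$ one gets $\{t, t\} + \{t, -1\} = 0$, and since $2\{t, -1\} = \{t, (-1)^2\} = \{t, 1\} = 0$, this reads $\{t, t\} = \{t, -1\}$. Anti-commutativity of characters in degree 2 follows formally by expanding $(x+y)(x+y) = (x+y) \cdot \{-1\}$.

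The main obstacle is verifying that these are all the relations. I would argue that the abstract graded ring defined by the generators $\K_\bullet(k)$ and $X$ subject to the two listed relations is a free $\K_\bullet(k)$-module whose graded ranks match those obtained by iterating the split short exact sequence from localization; the natural surjection from the presentation onto $\mH^0(H, \cK_\bullet)$ is then forced to be an isomorphism. The subtle step is the computation of the tame symbol on products of $\{t\}$ with classes pulled back from $H'$, which is needed to check that the only degree-two constraint produced by the K-theoretic splitting is precisely $\{t\}^2 = \{t\} \cdot \{-1\}$ and not some further hidden relation.
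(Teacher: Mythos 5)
The paper does not actually prove this proposition---it is quoted from Brylinski--Deligne \cite[Lemma 3.3.1]{BrD}---and your argument is essentially the proof given there: induction on the rank of the torus via the localization sequence for $H'\times\G_m\subset H'\times\mathbb{A}^1$ (purity plus homotopy invariance), split by multiplication with $\{t\}$, with the Steinberg relation supplying $x.x=x.\{-1\}$ and the free-module count forcing the presentation to be exact. The step you flag as subtle---that $\partial(\{t\}\cdot\pi^*a)=a$ while $\partial(\pi^*a)=0$---is the standard behaviour of the tame symbol on units regular along the divisor, and together with the inductive splitting it does rule out any hidden relations, so your outline is complete.
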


From the presentation we see that there is an increasing filtration $V$ on $\mH^0(H, \cK_\bullet)$ compatible with the grading where $V_n \mH^0(H, \cK_\bullet)$ is the subgroup of elements of $\mH^0(H, \cK_\bullet)$ containing not more than $n$ factors of $X$. The associated graded ring is given by
\[\gr_n^V\mH^0(H, \cK_m) = \bigwedge^n X\otimes \K_{m-n}(k).\]

Note that $\K_m(k)\subset \mH^0(H, \cK_m)$ is a direct summand, so we also have an embedding $X\otimes \K_{m-1}(k)\hookrightarrow \mH^0(H, \cK_m)$ given by the multiplication in $\K$-theory.

The filtration on $\mH^0(H, \cK_2)$ has a direct summand $\K_2(k)$ and thus the reduced cohomology group $\tilde{\mH}^0(H, \cK_2)$ fits into an exact sequence
\[0\rightarrow X\otimes k^\times \rightarrow \tilde{\mH}^0(H, \cK_2) \rightarrow \wedge^2 X\rightarrow 0.\]

This extension has the following description.

\begin{prop}[Brylinski--Deligne]
The reduced cohomology group $\tilde{\mH}^0(H, \cK_2)$ is isomorphic to the group of alternating forms $A$ on the cocharacter lattice together with a quadratic refinement $q\colon Y\rightarrow k^\times$ obeying
\[\frac{q(x+y)}{q(x)q(y)} = (-1)^{A(x, y)}.\]
\label{prop:k2extension}
\end{prop}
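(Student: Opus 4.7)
The plan is to build the map $\phi \mapsto (A, q)$ directly and then show it is an isomorphism via the five lemma, matching the given short exact sequence to an analogous one on the target side. Given $\phi \in \tilde{\mH}^0(H, \cK_2)$, I take $A \in \wedge^2 X$ to be its image under the provided quotient. For the refinement $q \colon Y \to k^\times$, I use the cocharacter lattice: each cocharacter $y \colon \G_m \to H$ pulls $\phi$ back to $y^*\phi \in \tilde{\mH}^0(\G_m, \cK_2)$, and the previous proposition applied to $\G_m$ (where $X = \Z\cdot t$ and $\wedge^2 X = 0$) identifies this reduced group with $k^\times$ via $t.\{a\} \mapsto a$. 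I set $q(y)$ to be the resulting scalar.

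To verify the refinement identity, fix a basis $\{x_i\}$ of $X$ and represent $\phi \equiv \sum_i a_i . x_i + \sum_{i<j} n_{ij}\, x_i . x_j \pmod{\K_2(k)}$. Pullback along $y$ followed by the Steinberg relation $t.t = t.\{-1\}$ yields
\[ q(y) = \prod_i a_i^{\langle x_i, y\rangle} \cdot \prod_{i<j} (-1)^{n_{ij} \langle x_i, y\rangle \langle x_j, y\rangle}. \]
The ratio $q(y_1 + y_2)/q(y_1)q(y_2)$ kills the linear contribution and produces the symmetric combination $(-1)^{\sum n_{ij}(\langle x_i,y_1\rangle\langle x_j,y_2\rangle + \langle x_i, y_2\rangle\langle x_j, y_1\rangle)}$, which modulo $2$ matches the antisymmetric expression defining $A(y_1, y_2)$.

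Finally, observe that the target group of pairs $(A, q)$ fits in a short exact sequence $0 \to \Hom(Y, k^\times) \to \{(A, q)\} \to \wedge^2 X \to 0$ (with $A = 0$ forcing $q$ to be a homomorphism), and $\Hom(Y, k^\times) \cong X \otimes k^\times$ canonically. The constructed map is a morphism of short exact sequences: it is the identity on $\wedge^2 X$ by construction, and on kernels it sends $a \cdot x$ to $y \mapsto a^{\langle x, y\rangle}$, which is precisely the canonical identification. The five lemma then concludes the argument. The main obstacle is sign bookkeeping: ensuring that the $\{-1\}$'s produced by repeated use of the Steinberg relation combine to give exactly $(-1)^{A(y_1, y_2)}$ with no extraneous factor, and checking that $q$ does not depend on the chosen representative of $\phi$ or on the choice of basis $\{x_i\}$.
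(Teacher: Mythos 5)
Your argument is correct. Note that the paper does not actually prove this proposition: it is quoted from Brylinski--Deligne, and the closest in-paper model is the proof of the analogous $\cK_3$ statement (Proposition \ref{prop:k3extension}), which fixes a basis of $X$, writes a general representative, lists the ambiguities generated by the Steinberg relation, and checks by hand that the proposed invariants are unchanged under them before analyzing the kernel filtration directly. Your route differs in two useful ways. First, you define $q(y)$ intrinsically as the image of $y^*\phi$ under $\tilde{\mH}^0(\G_m,\cK_2)\cong k^\times$; since pullback along a cocharacter is defined on the cohomology group itself, well-definedness of $q$ (independence of representative and of basis) is automatic, and the basis computation is only needed to verify the functional equation --- so the ``obstacle'' you flag at the end is not actually one. (This is in fact how Brylinski--Deligne themselves set it up.) Second, you replace the direct kernel analysis by the short five lemma against the sequence $0\to\Hom(Y,k^\times)\to\{(A,q)\}\to\wedge^2X\to 0$; for the version of the five lemma you need, right-exactness of this target sequence is not required (it follows a posteriori), so there is no gap there, though it would be worth one line to record that exactness in the middle is exactly the observation that $A=0$ forces $q$ to be a homomorphism. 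The one computation that must be done carefully is the sign bookkeeping you identify, and your displayed formula for $q(y)$ together with the mod-$2$ identification of the symmetric and antisymmetric bilinear expressions handles it correctly, provided you first absorb the diagonal terms $n_{ii}\,x_i.x_i=x_i.\{(-1)^{n_{ii}}\}$ into the linear part so that the representative has only $i<j$ quadratic terms.
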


Note that $q\colon Y\rightarrow k^\times$ is not a map of groups unless $A$ is zero. We will also need an analog of this proposition for $\cK_3$-classes.

\begin{prop}
\label{prop:k3extension}
The reduced cohomology $\tilde{\mH}^0(H, \cK_3)$ is isomorphic to the group of totally antisymmetric 3-forms $A(-, -, -)$ on the cocharacter lattice together with two refinements $q_1(-, -)\colon Y\times Y\rightarrow k^\times$ and $q_2(-)\colon Y\rightarrow \K_2(k)$ satisfying
\begin{align*}
q_1(x, y) &= q_1(y, x)^{-1}, \\
q_1(x + y, z) &= q_1(x, z) q_1(y, z) (-1)^{A(x, y, z)}, \\
q_2(x + y) &= q_2(x).q_2(y).\{-1, q_1(x, y)\}.
\end{align*}
\end{prop}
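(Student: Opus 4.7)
The plan is to mimic the strategy of Proposition \ref{prop:k2extension} by defining $(A, q_1, q_2)$ as ``coefficients'' obtained by pulling $\xi \in \tilde{\mH}^0(H, \cK_3)$ back to small tori, and then matching filtrations on both sides. Concretely, using the preceding proposition, I would first note that for $H = \G_m^n$ the ring $\mH^0(\G_m^n, \cK_\bullet)$ is free over $\K_\bullet(k)$ with basis the squarefree monomials $t_S = \prod_{i\in S} t_i$ (with the ring structure determined by $t_i.t_i = t_i.\{-1\}$). Hence
\[
\mH^0(\G_m, \cK_3) = \K_3(k) \oplus t.\K_2(k), \qquad \tilde{\mH}^0(\G_m^3, \cK_3) \supseteq t_1.t_2.t_3.\Z.
\]
Given a cocharacter $y$, define $q_2(y) \in \K_2(k)$ as the coefficient of $t$ in $y^*\xi$; given a pair $(y_1, y_2)$, define $q_1(y_1, y_2) \in k^\times$ as the coefficient of $t_1.t_2$ in $(y_1 \otimes y_2)^*\xi$; given a triple $(y_1, y_2, y_3)$, define $A(y_1, y_2, y_3) \in \Z$ as the coefficient of $t_1.t_2.t_3$ in $(y_1 \otimes y_2 \otimes y_3)^*\xi$.

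The antisymmetry statements would be obtained from graded commutativity of the $\cK$-theory product: swapping two $t_i$ introduces a sign, and for $q_1$ this sign in $k^\times$ corresponds to inversion (since $-1$ is 2-torsion in $\K_1$, the sign is absorbed and gives $q_1(x,y) = q_1(y,x)^{-1}$ by the usual multiplicative/additive notation switch). The two additivity-up-to-correction relations come from factoring $(x+y): \G_m \to H$ through the diagonal $\Delta: \G_m \to \G_m^2$ and then $x\otimes y$. For $q_2$: pulling $(x\otimes y)^*\xi = \cdots + t_1.q_2(x) + t_2.q_2(y) + t_1.t_2.\{q_1(x,y)\}$ back along $\Delta$ uses $t.t = t.\{-1\}$, producing the correction term $\{-1, q_1(x, y)\}$. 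For $q_1$, similarly factor $(x+y)\otimes z$ through $(\Delta\times\id):\G_m^2\to\G_m^3$, and the new contribution to the $t'_1.t'_2$ coefficient coming from the $t_1.t_2.t_3$ term is $A(x,y,z) \cdot \{-1\}$ by the same identity, yielding the factor $(-1)^{A(x,y,z)}$.

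For bijectivity I would use the filtration $V_\bullet$ on $\tilde{\mH}^0(H, \cK_3)$ with graded pieces $X\otimes \K_2(k)$, $\wedge^2 X \otimes k^\times$, $\wedge^3 X$, matched against the analogous filtration on triples by successive vanishing of $A$, then $q_1$ (which, once $A=0$, becomes biadditive and antisymmetric, i.e.\ an element of $\wedge^2 X \otimes k^\times$), then $q_2$ (which, once $A=q_1=0$, becomes a homomorphism $Y\to\K_2(k)$, i.e.\ an element of $X\otimes \K_2(k)$). The induced map on each graded piece is computed from the pairings of $\wedge^n X$ and $\wedge^n Y$ (explicitly, the class $\chi_{x_1}.\cdots.\chi_{x_n}.c$ pulled back along $y_1\otimes\cdots\otimes y_n$ picks out $\det(\langle x_i, y_j\rangle) \cdot c$), which is the standard duality isomorphism. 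A short five-lemma argument then upgrades these isomorphisms of graded pieces to an isomorphism of the filtered groups.

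The main obstacle is bookkeeping: all the relations are forced by the single identity $t.t = t.\{-1\}$ combined with graded commutativity, but one must carefully distinguish additive versus multiplicative notation for $\K_1$, and track the graded-commutativity signs through the iterated pullback computations. Once that is done, the surjectivity and injectivity follow formally from the compatibility with the filtrations.
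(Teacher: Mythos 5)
Your argument is correct, but it takes a genuinely different route from the paper's. The paper works in coordinates: it fixes a basis $\{e_i\}$ of $X$, writes a class as $a_{ijk}e_i.e_j.e_k + e_i.e_j.\{f_{ij}\} + e_i.g_i$, catalogues the ambiguities of this presentation coming from the relation $x.x=x.\{-1\}$, defines $A$, $q_1$, $q_2$ by explicit formulas in $(a,f,g)$, and checks well-definedness and the three relations by direct computation. You instead define the same invariants intrinsically, as squarefree-monomial coefficients of pullbacks along $y$, $y_1\otimes y_2$, $y_1\otimes y_2\otimes y_3$; this makes well-definedness automatic (pullback on $\mH^0$ does not see the presentation) and derives all three relations uniformly from functoriality applied to the (co)diagonal together with $t.t=t.\{-1\}$ and graded commutativity, while the concluding filtration-and-duality comparison is essentially the same as the paper's. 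Your normalization is in fact slightly better: unwinding it in the paper's coordinates recovers the paper's $A$ and $q_1$ exactly, but recovers $q_2$ with an extra $2$-torsion term $a(y,y,y).\{-1,-1\}$ coming from $t.t.t=t.\{-1,-1\}$, and it is precisely with this term included that the relation $q_2(x+y)=q_2(x).q_2(y).\{-1,q_1(x,y)\}$ holds on the nose. One small caution in your bijectivity step: once $A=0$ the stated relations only make $q_1$ biadditive and antisymmetric ($q_1(x,x)^2=1$), whereas the middle graded piece $\wedge^2 X\otimes k^\times\cong\Hom(\wedge^2 Y,k^\times)$ consists of alternating forms ($q_1(x,x)=1$), and only those are actually realized; you should record the alternating condition when matching graded pieces (the paper's statement carries the same implicit restriction).
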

\begin{proof}
If $\{e_i\}$ denotes a basis of $X$, a general element of $\tilde{\mH}^0(H, \cK_3)$ can be represented as
\[a_{ijk} e_i.e_j.e_k + e_i.e_j.\{f_{ij}\} + e_i.g_i\]
for some integer matrix $a_{ijk}$, a collection $f_{ij}$ of elements of $k^\times$ and a collection $g_i$ of elements of $\K_2(k)$.

We define \[a\in X^{\otimes 3},\quad f\in X\otimes X\otimes k^\times,\quad g\in X\otimes \K_2(k)\]
by a linear extension of these collections.

Any two representations of the same element are connected by a sequence of the following three identifications:
\begin{equation}
\begin{aligned}
a_{ijk}&\sim a_{ijk} + c_ic_j,\quad f_{ij}\sim f_{ij}(-1)^{c_i},\quad g_i\sim g_i, \\
a_{ijk}&\sim a_{ijk} + c_jc_k,\quad f_{ij}\sim f_{ij}(-1)^{c_j},\quad g_i\sim g_i, \\
a_{ijk}&\sim a_{ijk},\quad f_{ij}\sim f_{ij}h^{c_ic_j},\quad g_i\sim g_i.\{(-1)^{c_i}, h\}.
\end{aligned}
\label{k3choice}
\end{equation}
Here $c_i$ is a collection of integer numbers and $h\in k^\times$.

We define
\[A(x, y, z) = a(x, y, z) - a(y, x, z) - a(x, z, y) + a(y, z, x) + a(z, x, y) - a(z, y, x),\]
the antisymmetrization of $a$. It is clearly well-defined for an element of $\tilde{\mH}^0(H, \cK_3)$.

Next, we define
\[q_1(x, y) = \frac{f(x, y)}{f(y, x)}(-1)^{a(x, x, y) + a(x, y, x) + a(y, x, x) + a(x, y, y) + a(y, x, y) + a(y, y, x)}.\]
Again, a straightforward check shows that it is invariant under transformations \eqref{k3choice}.

The form $q_1$ is obviously alternating. It is not linear in the arguments, but satisfies the following equations:
\begin{align*}
q_1(x_1 + x_2, y) &= q_1(x_1, y) q_1(x_2, y) (-1)^{A(x_1, x_2, y)}, \\
q_1(x, y_1 + y_2) &= q_1(x, y_1) q_1(x, y_2) (-1)^{A(x, y_1, y_2)}.
\end{align*}

Finally, we define
\[q_2(x) = g(x).\{-1, f(x, x)\}.\]

It satisfies the following equation:
\begin{align*}
q_2(x+y) &= g(x+y).\{-1, f(x+y, x+y)\} \\
&= g(x).g(y).\{-1, f(x, x)f(x, y)f(y, x)f(y, y)\} \\
&= q_2(x)q_2(y).\{-1, f(x, y)f(y, x)\} \\
&= q_2(x).q_2(y).\{-1, q_1(x, y)\}.
\end{align*}

Let us now prove that the data of $A$, $q_1$ and $q_2$ satisfying the linearity equations above is equivalent to specifying an element of $\tilde{\mH}^0(H, \cK_3)$.

Indeed, we have defined a surjective map $\tilde{\mH}^0(H, \cK_3)\twoheadrightarrow \wedge^3 X$. Its kernel $K$ is represented by elements with $a_{ijk}$ whose antisymmetrization is zero. Such a matrix can be made zero using the first two transformations in \eqref{k3choice}. So, elements of $K$ can be represented as
\[e_i.e_j.\{f_{ij}\} + e_i.g_i.\]

We have defined a surjective map $K\twoheadrightarrow \wedge^2 X\otimes k^\times$. Its kernel consists of elements with $f_{ij}$ symmetric. Using the last transformation in \eqref{k3choice} we make $f_{ij} = 1$, so we get an exact sequence
\[0\rightarrow X\otimes \K_2(k)\rightarrow K\rightarrow \wedge^2 X\otimes k^\times\rightarrow 0.\]

Note that the filtration
\[X\otimes \K_2(k)\subset K\subset \tilde{\mH}^0(H, \cK_3)\]
we have defined during the proof coincides with the $V$ filtration.
\end{proof}

\section{$\cK$-cohomology of semisimple groups}

\label{sect:kcohomologysemisimple}

In this section we let $G$ be a split connected simply-connected semisimple group over a field $k$, $H\subset G$ is a split maximal torus and $W$ the Weyl group. Let $W^{(n)}\subset W$ be the subset of elements of $W$ of length $l(w_0) - n$, where $l(w_0)$ is the length of the longest element. Let $X$ be the character group of $H$ and $Y$ the cocharacter group. Let $s_i\in W$ denote the simple reflections with respect to a chosen Borel in $G$ and $\alpha_i^\vee\in Y$ are the simple coroots. Since $G$ is simply-connected, the weight lattice coincides with $X$.

\subsection{Brylinski--Deligne spectral sequence}

The Bruhat decomposition of $G$ allows one to compute the cohomology $\mH^\bullet(G, \cK_n)$ via the Cousin complex $\mC^\bullet(G, \cK_n)$, where each term is expressed in terms of $\mH^0(H, \cK_j)$ for some $j$. The filtration on $\mH^0(H, \cK_j)$ extends to a filtration on $\mC^\bullet(G, \cK_n)$. Thus, Brylinski and Deligne arrive at a spectral sequence of the filtered complex whose $E_0$ page has terms
\[E_0^{p, q} = \bigoplus_{w\in W^{(p+q)}} \bigwedge^{-2p - q} X\otimes \K_{n+p}(k).\]

Note that since the filtration on $\mH^0(H, \cK_j)$ is increasing, the cohomological spectral sequence is concentrated in the second quadrant.

The $E_0$ differential $\d\colon E_0^{p, q}\rightarrow E_0^{p, q+1}$ is given by the Bruhat order and has the following description. It is enough to specify the differential $\d$ by its components
\[\d_w^{w_1}\colon \bigwedge^{-2p-q} X\otimes \K_{n+p}(k)\rightarrow \bigwedge^{-2p-q-1} X\otimes \K_{n+p}(k),\]
where the first term sits at $w\in W^{(p+q)}$ and the second term sits at $w_1\in W^{(p+q+1)}$. If $w=w'w''$ and $w_1=w's_iw''$ for a simple reflection $s_i$, then the differential $\d_w^{w_1}$ is given by contracting the $\wedge^{-2p-q} X$ part with $(w'')^{-1}(\alpha_i^\vee)$.

The spectral sequence converges to the $\cK$-cohomology of $G$:
\[E_0^{p, q}\Rightarrow \mH^{p+q}(G, \cK_n).\]

\subsection{$\cK_2$-cohomology}

Let us illustrate the power of the spectral sequence by computing the $\cK_2$-cohomology of $G$. We will outline the proof of the following theorem \cite[Proposition 4.6]{BrD}:
\begin{thm}[Brylinski--Deligne]
Let $G$ be a split connected simply-connected semisimple group over a field $k$. It has the following $\cK_2$-cohomology groups:
\begin{itemize}
\item $\mH^0(G, \cK_2) = \K_2(k)$,
\item $\mH^1(G, \cK_2)$ is isomorphic to the group of $W$-invariant quadratic forms on the cocharacter lattice.
\end{itemize}

The higher cohomology groups vanish.
\end{thm}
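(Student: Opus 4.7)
The plan is to apply the Brylinski--Deligne spectral sequence with $n = 2$ and show that it collapses to only two surviving groups. The first step is to pin down which bidegrees $(p, q)$ can carry a nonzero $E_0^{p, q}$: combining $p \leq 0$, $p + q \geq 0$, $-2p - q \geq 0$, and $n + p \geq 0$ confines the support to the three columns $p = 0, -1, -2$. The column $p = 0$ is the single group $E_0^{0, 0} = \K_2(k)$ sitting at $w_0$; the column $p = -1$ is a two-term complex
\[X \otimes k^\times \longrightarrow \bigoplus_{w \in W^{(1)}} k^\times;\]
and the column $p = -2$ is the three-term complex
\[\wedge^2 X \longrightarrow \bigoplus_{w \in W^{(1)}} X \longrightarrow \bigoplus_{w \in W^{(2)}} \Z.\]

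Next I would compute the cohomology of each column using that the simple coroots $\alpha_i^\vee$ form a $\Z$-basis of $Y$ (this is where the simply-connected hypothesis enters). Writing out the $E_0$ differential at the edge $W^{(0)} \to W^{(1)}$ shows that the map $X \otimes k^\times \to \bigoplus_i k^\times$ sends $x \otimes a$ to $(a^{\langle \alpha_i^\vee, x \rangle})_i$, which is a bijection; hence the column $p = -1$ is acyclic. The same basis fact makes the first map of the $p = -2$ column injective.

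The heart of the argument is to identify the middle cohomology of the $p = -2$ column with the group of Weyl-invariant integer-valued quadratic forms on $Y$, and to show the final map is surjective (so that $E_1^{-2, 4} = 0$ and thus $\mH^2(G, \cK_2) = 0$). The kernel of the second differential is cut out from $\bigoplus_i X$ by compatibility relations indexed by $W^{(2)}$, which after some unpacking correspond to the unordered pairs $\{s_i, s_j\}$ of simple reflections. I would argue that these relations match exactly the conditions forcing a tuple $(x_i)_i \in \bigoplus_i X$ to arise as $x_i = B(\alpha_i^\vee, -)$ for some $W$-invariant symmetric bilinear form $B$ on $Y$; passing to the quotient by the image of $\wedge^2 X$ then kills the antisymmetric component of $B$ and leaves the associated $W$-invariant quadratic form.

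Finally, to conclude I would verify degeneration. On $E_1$ only the two groups $E_1^{0, 0} = \K_2(k)$ and $E_1^{-2, 3}$ survive, and any higher differential between them can be ruled out either on bidegree reasons or by noting that the unit section $\Spec k \to G$ splits $\K_2(k) = \mH^0(\Spec k, \cK_2) \hookrightarrow \mH^0(G, \cK_2)$ off as a direct summand. The main obstacle I expect is the third step: carefully matching the equations in $\bigoplus_i X$ cut out by the Bruhat-order differentials on pairs of simple reflections with the Weyl invariance relations defining a quadratic form. This combinatorial identification is what ultimately forces the isomorphism and delivers the clean statement of the theorem.
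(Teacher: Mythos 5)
Your strategy is the same as the paper's: run the Brylinski--Deligne spectral sequence for $n=2$, use the simply-connected hypothesis to identify $X\xrightarrow{\ \sim\ }\oplus_{W^{(1)}}\Z$, $x\mapsto(x(\alpha_i^\vee))$, so that the $p=-1$ column is acyclic and the first map of the $p=-2$ column is injective, cite/prove surjectivity onto $\oplus_{W^{(2)}}\Z$, identify the middle cohomology with $W$-invariant quadratic forms, and observe $E_1$-degeneration (which here is automatic for bidegree reasons; the unit-section argument is not needed). The paper likewise only outlines this and defers the surjectivity of $E_0^{-2,3}\to E_0^{-2,4}$ to Brylinski--Deligne.

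One correction to the step you rightly single out as the crux. The kernel of $\oplus_{W^{(1)}}X\to\oplus_{W^{(2)}}\Z$ is \emph{not} the group of $W$-invariant symmetric bilinear forms: a tuple $(x_i)_i$ defines an arbitrary bilinear form $C$ on $Y$ by $C(\alpha_i^\vee,-)=x_i$, and the relations cut out by $W^{(2)}$ are the cocycle conditions $C(\alpha_i^\vee,\alpha_j^\vee)+C(\alpha_j^\vee,s_j(\alpha_i^\vee))=0$, with no symmetry imposed. The image of $\wedge^2X$ consists exactly of the alternating such $C$, and the quotient is identified with $W$-invariant quadratic forms via $C\mapsto(y\mapsto C(y,y))$. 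If you literally restricted to integral \emph{symmetric} $B$ you would lose forms whose polarization is not divisible by $2$ off the diagonal: already for $\mathrm{SL}_3$ the basic invariant quadratic form $Q$ has $Q(\alpha_1^\vee+\alpha_2^\vee)-Q(\alpha_1^\vee)-Q(\alpha_2^\vee)=-1$, so it is not of the form $B(y,y)$ for any integral symmetric $B$, and your answer would come out as an index-two subgroup. Also note that a symmetric form has no ``antisymmetric component'' to kill, so the sentence as written is internally inconsistent; working with general bilinear forms modulo alternating ones, as above, repairs it.
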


The set $W^{(0)}$ has only one element, the longest element $w_0$. The set $W^{(1)}$ can be identified with the set of simple roots as $w_0s_i\in W^{(1)}$ for any $s_i$ a simple reflection. Finally, $W^{(2)}$ consists of elements $w_0s_is_j$ for any two unequal simple roots $s_i$ and $s_j$; the elements $w_0s_is_j$ and $w_0s_js_i$ are identified if $\alpha_i$ and $\alpha_j$ are orthogonal.

The $E_0$ page is
\[
\xymatrix@R-1pc@C-2pc{
\oplus_{W^{(2)}} \Z & 0 & 0 \\
\oplus_{W^{(1)}} X \ar[u] & 0 & 0 \\
\bigwedge^2 X \ar[u] & \oplus_{W^{(1)}} k^\times & 0 \\
0 & X\otimes k^\times \ar[u] & 0 \\
0 & 0 & \K_2(k)
}
\]

Here $p$ labels the horizontal axis and $q$ the vertical axis. The whole spectral sequence is concentrated in the second quadrant with $E_0^{0, 0} = \K_2(k)$.

We can identify $\oplus_{W^{(1)}} \Z$ with the weight lattice: since $G$ is simply-connected, the map $X\rightarrow \oplus_{W^{(1)}} \Z$ sending $x\mapsto \{x(\alpha_i^\vee)\}$ is an isomorphism. Therefore, $E_1^{-1, 1} = E_1^{-1, 2} = 0$. The same argument shows that the differential $E_0^{-p, p}\rightarrow E_0^{-p, p+1}$ is injective, hence $E_1^{-p, p} = 0$.

\begin{prop}
The differential $E_0^{-2, 3}\rightarrow E_0^{-2, 4}$ is surjective.
\label{prop:2rowsurjective}
\end{prop}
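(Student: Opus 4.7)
The target $E_0^{-2, 4} = \bigoplus_{w_1 \in W^{(2)}} \Z$ is free abelian on $W^{(2)}$, whose elements are $w_0 s_i s_j$ with $i \neq j$, subject to $w_0 s_i s_j = w_0 s_j s_i$ when $s_i, s_j$ commute. I would establish surjectivity by constructing explicit integer preimages in $E_0^{-2,3} = \bigoplus_{w \in W^{(1)}} X$ for each generator. Since $G$ is simply-connected, the fundamental weights $\omega_k \in X$ with $\omega_k(\alpha_l^\vee) = \delta_{kl}$ form a $\Z$-basis of $X$, and these are the natural building blocks.

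Unwinding the paper's differential formula on reduced expressions, the covering pair $w_0 s_i s_j \lessdot w_0 s_i$ contracts with $\alpha_j^\vee$, and when $s_i, s_j$ do not commute there is a second, distinct covering pair $w_0 s_j s_i \lessdot w_0 s_i$ contracting with $s_i(\alpha_j^\vee) = \alpha_j^\vee - a_{ij}\alpha_i^\vee$, where $a_{ij} = \langle \alpha_i, \alpha_j^\vee\rangle$. Consequently, $\omega_j$ placed at the $w_0 s_i$-component has image $+1$ at $w_0 s_i s_j$ and, only in the non-commuting case, an additional $+1$ at $w_0 s_j s_i$, with zero at all other target components. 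In the commuting case this alone realizes the generator. For a non-commuting pair $(i,j)$ one additionally places $\omega_i$ at $w_0 s_i$, which contributes $-a_{im}$ at each $w_0 s_m s_i$ with $a_{im} \neq 0$, together with the symmetric terms at $w_0 s_j$; a short case check across the rank-2 root subsystems $A_2$, $B_2$, $G_2$ verifies that integer combinations of these four elements span the $\Z^2$-block indexed by the pair $(w_0 s_i s_j, w_0 s_j s_i)$.

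The main obstacle is the non-simply-laced case where $|a_{ij}| \in \{2, 3\}$: single fundamental weights no longer directly produce the target generators, and the spurious contributions of $\omega_i$ at $w_0 s_i$ to other components $w_0 s_m s_i$ must be cancelled by combining preimages across pairs, giving a global linear-algebra argument rather than a clean preimage-by-preimage construction. A cleaner alternative, invoking external input, is to cite $\CH^2(G) = 0$ from the introduction together with Bloch's isomorphism $\mH^2(G, \cK_2) \cong \CH^2(G)$: one checks $E_1^{-2, 4} = \mathrm{coker}(E_0^{-2, 3} \to E_0^{-2, 4})$ using $E_0^{-2, 5} = 0$, then notes $E_0^{0, 2} = E_0^{-1, 3} = 0$ for dimension reasons and that no $d_r$-differential for $r \geq 1$ enters or leaves $E_r^{-2, 4}$, so that $E_1^{-2, 4} = E_\infty^{-2, 4} = \mH^2(G, \cK_2) = 0$, which is exactly the required surjectivity.
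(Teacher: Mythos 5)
Your proposal contains two arguments; the second is the one that works, and it is a genuinely different route from the paper's, which offers no argument of its own but simply defers to the explicit combinatorial computation in the second half of the proof of Proposition 4.6 of Brylinski--Deligne. Your reduction is sound: $E_0^{-2,5}=\bigoplus_{W^{(3)}}\bigwedge^{-1}X\otimes\K_0(k)=0$, so $E_1^{-2,4}$ is exactly the cokernel of the differential; the remaining terms of total degree $2$ vanish (negative exterior powers for $p=0,-1$, and $\K_{2+p}(k)=0$ for $p\le -3$); no $d_r$ with $r\ge 1$ enters or leaves position $(-2,4)$; hence $E_1^{-2,4}=E_\infty^{-2,4}=\mH^2(G,\cK_2)\cong\CH^2(G)$ by Bloch's formula. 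The one thing you must say explicitly is where $\CH^2(G)=0$ comes from, because inside the paper the vanishing of $\mH^2(G,\cK_2)$ is a \emph{consequence} of the very proposition you are proving, so the input has to be independent. It is: either quote the classical Chow-ring computations cited in the introduction, or argue as in the paper's remark on $\CH^3(G)$ --- Levine gives $\K_0(G)\cong\Z$ with trivial coniveau filtration, and Riemann--Roch without denominators gives $c_2\pi_2=-\id$, so $\pi_2\colon\CH^2(G)\rightarrow\gr^2\K_0(G)=0$ is injective and $\CH^2(G)=0$. With that reference in place the proof is complete, and it also yields the variant invoked in Section \ref{sect:kcohomologyreductive}, since the map $\oplus_{W^{(1)}}X_{der}\otimes X_0\rightarrow\oplus_{W^{(2)}}X_0$ is the same surjection tensored with the free abelian group $X_0$. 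What the indirect argument gives up is effectivity: the combinatorial route produces explicit preimages.

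Your first, explicit argument is incomplete as written, as you yourself flag. The element $\omega_i$ placed at $w_0 s_i$ contributes $-\alpha_i(\alpha_m^\vee)$ to \emph{every} component $w_0 s_m s_i$ with $m$ adjacent to $i$, so the rank-two check on a single block does not give surjectivity; one has to show that, after quotienting by the images of the $\omega_j$ at $w_0 s_i$ for $j\neq i$ (which collapse the target onto $\Z^E$, with $E$ the edge set of the Dynkin diagram), the images of the $\omega_i$ at $w_0 s_i$ still span. This does work --- induct from a leaf of each component, using that a connected Dynkin diagram has at most one multiple bond --- but it is exactly the global case analysis you stopped short of, so on its own this half would not be acceptable.
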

\begin{proof}
See the second half of the proof of \cite[Proposition 4.6]{BrD}.
\end{proof}

Therefore, $E_1^{-2, 4} = 0$. Finally, we can identify $E_1^{-2, 3}$ with the group of bilinear forms $C$ on $Y$ satisfying
\[C(\alpha_i^\vee, \alpha_j^\vee) + C(\alpha_j^\vee, s_j(\alpha_i^\vee)) = 0,\]
or, equivalently, with the group of $W$-invariant quadratic forms $\{y\mapsto C(y, y)\}$ on the cocharacter lattice.

The $E_1$ page of the spectral sequence is
\begin{center}
\[
\xymatrix@R-2pc@C-2pc{
0 & 0 & 0 \\
E_1^{-2, 3} & 0 & 0 \\
0 & 0 & 0 \\
0 & 0 & 0 \\
0 & 0 & \K_2(k).
}
\]
\end{center}

The spectral sequence degenerates at $E_1$, thus we obtain the desired cohomology groups.

\subsection{$\cK_3$-cohomology}

Let us apply the spectral sequence for the computation of the cohomology groups $\mH^n(G, \cK_3)$. We will prove the following theorem:
\begin{thm}
Let $G$ be a split connected simply-connected semisimple group over a field $k$. Then it has the following $\cK_3$-cohomology:
\begin{itemize}
\item $\mH^0(G, \cK_3)\cong \K_3(k)$,

\item $\mH^1(G, \cK_3)$ is isomorphic to the group of $W$-invariant quadratic forms on the cocharacter lattice with values in $k^\times$,

\item $\mH^2(G, \cK_3)$ is isomorphic to the group of $W$-invariant cubic forms on the cocharacter lattice,

\item $\CH^3(G)\cong \mH^3(G, \cK_3)$ is isomorphic to $(\Z/2\Z)^{n_{\BDG}}$, where $n_{\BDG}$ is the number of times a $\mG_2$, $\mB_3$ or $\mD_4$ subdiagram appears in the Dynkin diagram of $G$.
\end{itemize}

The higher cohomology groups vanish.
\end{thm}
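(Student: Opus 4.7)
The plan is to compute the Brylinski--Deligne spectral sequence for $n=3$, by direct analogy with the $\cK_2$ computation sketched above. First I would draw the $E_0$ page: the only nonzero columns are $p=0,-1,-2,-3$, with $E_0^{0,0}=\K_3(k)$; the $p=-1$ column being $X\otimes \K_2(k)\to \oplus_{W^{(1)}}\K_2(k)$; the $p=-2$ column being the three-term complex
\[\wedge^2 X\otimes k^\times \to \oplus_{W^{(1)}} X\otimes k^\times \to \oplus_{W^{(2)}} k^\times;\]
and the $p=-3$ column being the four-term complex (with $\Z$-coefficients)
\[\wedge^3 X \to \oplus_{W^{(1)}}\wedge^2 X \to \oplus_{W^{(2)}}X \to \oplus_{W^{(3)}} \Z.\]
I would then compute $E_1$ column by column and read off $\mH^\bullet(G,\cK_3)$ from the diagonals.

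For the easier columns, the $p=0$ column contributes $\K_3(k)$, giving $\mH^0(G,\cK_3)$. For $p=-1$, the differential is the simply-connectedness isomorphism $X\cong \oplus_{W^{(1)}}\Z$ tensored with $\K_2(k)$, hence itself an isomorphism, so $E_1^{-1,1}=E_1^{-1,2}=0$. For $p=-2$, the differentials coincide with those of the $\cK_2$-column tensored with $k^\times$: the initial map is injective by exhibiting a retraction in the fundamental-weight basis (equivalently, by showing its $\Z$-level cokernel is torsion-free), and the final map is surjective by Proposition \ref{prop:2rowsurjective} together with right-exactness of $-\otimes k^\times$. The middle cohomology $E_1^{-2,3}$ is identified with the group of $W$-invariant quadratic forms on $Y$ valued in $k^\times$ by repeating the bilinear-form analysis of the $\cK_2$-case verbatim with $k^\times$ replacing $\Z$.

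The heart of the proof is the $p=-3$ column. The start-injectivity $E_1^{-3,3}=0$ is again a fundamental-weight computation. The three interior terms are $E_1^{-3,4}$, which must vanish for $\mH^1$ to come entirely from $(-2,3)$; $E_1^{-3,5}$, which should yield the group of $W$-invariant cubic forms on $Y$; and $E_1^{-3,6}=\CH^3(G)$, which is the cokernel of $\oplus_{W^{(2)}} X \to \oplus_{W^{(3)}} \Z$. For the cubic-form identification I would mimic the $\cK_2$-analysis: write an element of $\oplus_{W^{(2)}} X$ in the fundamental-weight basis as a trilinear tensor, unpack the kernel-of-$d$ condition into pairwise relations among its components, and check that these relations are equivalent to symmetrization producing a $W$-invariant cubic form. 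For $E_1^{-3,6}$ I would compute the cokernel by a case-by-case analysis, exploiting the fact that every relation in its presentation is local to a rank-$3$ subdiagram of the Dynkin diagram; direct enumeration would show that only $\mG_2$, $\mB_3$, and $\mD_4$ contribute a $\Z/2\Z$-summand.

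Finally, the spectral sequence degenerates at $E_1$ from position considerations: the surviving nonzero entries sit at $(0,0)$, $(-2,3)$, $(-3,5)$, $(-3,6)$, too far apart for any $d_r$ of bidegree $(r,1-r)$ to connect them, so $E_\infty=E_1$. The vanishing of $\mH^k(G,\cK_3)$ for $k\geq 4$ is immediate from the absence of $E_1$-entries on the diagonals $p+q\geq 4$. The main obstacle is the $p=-3$ column: both the vanishing of $E_1^{-3,4}$ and the identification $E_1^{-3,6}\cong(\Z/2\Z)^{n_{\BDG}}$ require new combinatorial lemmas analogous to Proposition \ref{prop:2rowsurjective}, and the latter in particular relies on the explicit classification of connected rank-$3$ Dynkin subdiagrams.
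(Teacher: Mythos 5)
Your plan is essentially the paper's own proof: compute the Brylinski--Deligne $E_0$ page column by column, kill the $p=-1$ column and the ends of the $p=-2,-3$ columns by simple-connectivity, identify $E_1^{-2,3}$ with $k^\times$-valued $W$-invariant quadratic forms and $E_1^{-3,5}$ with $W$-invariant cubic forms, and conclude degeneration at $E_1$ from the positions of the surviving entries. The two steps you explicitly defer are exactly where the paper spends most of its effort: $E_1^{-3,4}=0$ is proved by showing that any closed collection of alternating forms $\{D_i\}$ is automatically totally antisymmetric (hence exact), and $E_1^{-3,6}\cong(\Z/2\Z)^{n_{\BDG}}$ requires not just a local rank-$3$ enumeration but a global iterative solution of the coupled system of equations for the $\phi_{ij}$ over each connected component of the Dynkin diagram, since the unknowns are shared between the rank-$2$ and rank-$3$ relations.
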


The groups $\mH^0(G, \cK_n)$ and $\mH^1(G, \cK_n)$ for all $n$ have been previously computed by Gille \cite[Theorem B]{Gil}.

We will need a description of $W^{(3)}$. Its elements are $w_0s_is_js_k$, where the length of $s_is_js_k$ is 3. The following statement can be obtained from the presentation of the Weyl group as a Coxeter group.

\begin{lm}
The product of simple reflections $s_is_js_k$ has length 1 iff any of the following is satisfied:
\begin{itemize}
\item The adjacent indices are equal.

\item The adjacent roots are orthogonal and $i = k$.
\end{itemize}

Two length 3 elements $s_is_js_k$ and $s_ls_ms_n$ are equal iff any of the following holds:
\begin{itemize}
\item $(ijk)$ is obtained from $(lmn)$ by a permutation of orthogonal roots

\item The roots $i$ and $j$ are connected by a single edge and $i=k=m$, $j=l=n$.
\end{itemize}
\end{lm}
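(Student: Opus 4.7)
The plan is to deduce both parts from standard Coxeter-theoretic results: the deletion condition for the length-one criterion, and Matsumoto's theorem on reduced expressions for the equality criterion.

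For the length-one statement, I would assume $l(s_is_js_k) = 1$ and apply the deletion condition to the word $(i,j,k)$ to find a pair of positions whose omission yields an equivalent one-letter word. There are three cases: deleting positions $1,2$ forces $s_is_j = e$, hence $i = j$; deleting positions $2,3$ forces $s_js_k = e$, hence $j = k$; and deleting positions $1,3$ forces $s_js_ks_j = s_i$. In the last case, $s_js_ks_j$ is the reflection in the root $s_j(\alpha_k) = \alpha_k - \langle \alpha_k, \alpha_j^\vee\rangle\alpha_j$, which is a simple root precisely when $\langle \alpha_k, \alpha_j^\vee\rangle = 0$, i.e., when $\alpha_j \perp \alpha_k$; then $s_js_ks_j = s_k$ and so $i = k$. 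Each converse implication is a one-line computation.

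For the equality statement, I would invoke Matsumoto's theorem: any two reduced expressions for the same element are linked by a sequence of braid moves $\underbrace{s_as_b\cdots}_{m_{ab}} = \underbrace{s_bs_a\cdots}_{m_{ab}}$. Only two flavors fit inside a three-letter window: the commuting swap $s_as_b = s_bs_a$ applied to two adjacent letters (requiring $\alpha_a \perp \alpha_b$), and the length-three braid $s_as_bs_a = s_bs_as_b$ for a single-edge pair, applied to the whole word and forcing it to have the shape $(a,b,a)$. A word of this shape with $\alpha_a, \alpha_b$ single-edge admits no commuting swap, since the two generators are non-orthogonal, and the braid-equivalent $(b,a,b)$ is equally rigid. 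Consequently, chains cannot productively mix the two flavors: the equivalence class of a length-three reduced word is either an orbit of adjacent orthogonal swaps, matching the first bullet, or the two-element set $\{(i,j,i),(j,i,j)\}$ with $\alpha_i, \alpha_j$ single-edge, matching the second bullet.

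The main obstacle is verifying, in the second part, that no chain of braid moves produces an unexpected equivalence beyond the two families above; the rigidity observation reduces this to a finite, case-by-case check, since every length-three word lives in a rank-at-most-three parabolic subgroup of $W$.
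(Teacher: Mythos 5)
Your argument is correct and is precisely the standard Coxeter-theoretic route (deletion condition for the length criterion, Matsumoto--Tits for the classification of reduced words) that the paper invokes without proof when it says the lemma "can be obtained from the presentation of the Weyl group as a Coxeter group." The one point worth making explicit is the observation you already note: commuting swaps preserve the multiset of letters while the three-letter braid move only applies to words of the shape $(a,b,a)$ with $\alpha_a\not\perp\alpha_b$, which admit no commuting swaps, so the two families of equivalences never mix and no further case check is actually needed.
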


The spectral sequence computing $\mH^\bullet(G, \cK_3)$ has the following $E_0$ page:
\begin{center}
\[
\xymatrix@R-1pc@C-2pc{
\oplus_{W^{(3)}} \Z & 0 & 0 & 0 \\
\oplus_{W^{(2)}} X \ar[u] & 0 & 0 & 0 \\
\oplus_{W^{(1)}} \bigwedge^2 X \ar[u] & \oplus_{W^{(2)}} k^\times & 0 & 0\\
\bigwedge^3 X \ar[u] & \oplus_{W^{(1)}} X\otimes k^\times \ar[u] & 0 & 0 \\
0 & \bigwedge^2 X\otimes k^\times \ar[u] & \oplus_{W^{(1)}} \K_2(k) & 0 \\
0 & 0 & X\otimes \K_2(k) \ar[u] & 0 \\
0 & 0 & 0 & \K_3(k)
}
\]
\end{center}

Due to simply-connectivity of $G$ we again have
\[E_1^{-1, 1} = E_1^{-1, 2} = 0\]
and the differentials $E_0^{-2, 2}\rightarrow E_0^{-2, 3}$ and $E_0^{-3, 3}\rightarrow E_0^{-3, 4}$ are injective.

As explained in the previous section, $E_1^{-2, 4} = 0$ and $E_1^{-2, 3}$ is isomorphic to the group of $W$-invariant quadratic forms on the cocharacter lattice with values in $k^\times$.

\begin{prop}
The cohomology group $E_1^{-3, 4}$ is zero.
\label{prop:3colvanishing}
\end{prop}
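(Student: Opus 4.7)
We need to show the three-term complex
\[
\bigwedge^3 X \xrightarrow{d_3} \bigoplus_{i} \bigwedge^2 X \xrightarrow{d_4} \bigoplus_{W^{(2)}} X
\]
has vanishing middle cohomology. Since $d_3$ is already known to be injective, it suffices to verify the inclusion $\ker d_4 \subseteq \mathrm{im}\, d_3$. The plan is to make both differentials completely explicit using the recipe for the Brylinski--Deligne $E_0$-differential, exploit the existence of the fundamental-weight basis $\{\omega_k\}$ of $X$ dual to the simple coroots (available because $G$ is simply connected), and perform a direct linear-algebra check that every class in $\ker d_4$ lifts back to $\bigwedge^3 X$.

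Unpacking the definitions, $d_3$ sends $\omega \in \bigwedge^3 X$ to the tuple $(\iota_{\alpha_i^\vee}\omega)_i$. For $d_4$, the component $\eta_i \in \bigwedge^2 X$ at the index $w_0 s_i \in W^{(1)}$ contributes at the target $w_0 s_j s_i \in W^{(2)}$ (with $i \neq j$) through the factorization $(w_0, s_i)$ by contracting $\eta_i$ with $s_i(\alpha_j^\vee)$; the component $\eta_j$ contributes at the same target through the factorization $(w_0 s_j, 1)$ by contracting $\eta_j$ with $\alpha_i^\vee$. When $\alpha_i \perp \alpha_j$, the targets $w_0 s_j s_i$ and $w_0 s_i s_j$ are identified in $W^{(2)}$, and these two sets of contributions either collapse or cancel according to the Koszul signs attached to the factorizations.

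Expanding $\eta_i = \sum_{a<b} c^i_{ab}\, \omega_a \wedge \omega_b$ in the fundamental-weight basis, the equation $d_4(\eta) = 0$ becomes an explicit system on the integers $c^i_{ab}$ involving the Cartan integers $a_{ij} = \langle \alpha_i, \alpha_j^\vee \rangle$. The crucial observation is that this system decouples by the unordered triple $\{a, b, c\}$ of simple-root labels on which the relevant coefficients are supported: for each triple one obtains a block of linear relations that I would show force the local data to equal $d_3(\lambda\, \omega_a \wedge \omega_b \wedge \omega_c)$ for a unique $\lambda \in \Z$. Summing the local lifts produces the desired global preimage under $d_3$.

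The main obstacle is the combinatorial case analysis over the sub-Dynkin diagrams supported on each triple of simple roots --- namely $A_1^3$, $A_1\times A_2$, $A_3$, $A_1\times B_2$, $B_3$, $C_3$ and $A_1\times G_2$, each contributing a different shape of target components in $W^{(2)}$ with its own Cartan integers and Koszul signs. The verification is parallel to, but one combinatorial step higher than, the surjectivity statement of Proposition~\ref{prop:2rowsurjective} for $\cK_2$, and no new conceptual input is needed beyond careful bookkeeping.
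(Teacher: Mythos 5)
Your outline has a structural flaw that blocks the proposed reduction, and the one computation that would repair it --- which turns out to be the entire content of the proposition --- is left undone. Write $\eta_i=\sum_{a<b}c^i_{ab}\,\omega_a\wedge\omega_b$ in the fundamental-weight basis. The equation at the target $w_0s_js_i$ contracts $\eta_i$ with $s_i(\alpha_j^\vee)=\alpha_j^\vee-\alpha_i(\alpha_j^\vee)\alpha_i^\vee$, so its $\omega_c$-component involves not only $c^i_{jc}$ and $c^j_{ic}$ but also the degenerate coefficient $c^i_{ic}=\pm\,\eta_i(\alpha_i^\vee,\alpha_c^\vee)$, weighted by the Cartan integer $\alpha_i(\alpha_j^\vee)$. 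That single unknown appears in the block attached to \emph{every} triple $\{i,j',c\}$ as $j'$ varies over the neighbours of $i$, so the system does not decouple by unordered triples of simple-root labels, and the advertised case analysis over rank-$3$ subdiagrams cannot even be set up until you first prove that $\eta_i(\alpha_i^\vee,-)=0$ for every closed element. Your proposal neither states nor establishes this vanishing; everything else is bookkeeping that you have deferred ("I would show\dots"), so as it stands there is no proof. (Also, injectivity of $d_3$ is irrelevant here: computing $E_1^{-3,4}$ requires exactly $\ker d_4\subseteq\mathrm{im}\,d_3$, nothing about $\ker d_3$.)

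The missing vanishing is in fact where the paper starts, and once it is isolated the entire Dynkin-type enumeration evaporates. Encode $\eta$ as a trilinear form $D$ on $Y$ with $D(\alpha_i^\vee,-,-)=D_i(-,-):=\eta_i(-,-)$, antisymmetric in its last two slots; since the simple coroots form a basis of $Y$ (simple connectedness), exactness of $\eta$ is \emph{literally} the statement that $D$ is totally antisymmetric, with the unique preimage in $\wedge^3X$ being $D$ itself --- no Koszul-sign or Cartan-integer analysis needed. The closedness relation reads $D_i(\alpha_j^\vee,\alpha_k^\vee)+D_j(\alpha_i^\vee,\alpha_k^\vee)-\alpha_j(\alpha_i^\vee)D_j(\alpha_j^\vee,\alpha_k^\vee)=0$; specializing the first two indices to $(k,j)$ and the last to $j$ kills two of the three terms by antisymmetry of $D_k$ and $D_j$ and yields $D_j(\alpha_j^\vee,\alpha_k^\vee)=0$, after which the same relation gives $D_i(\alpha_j^\vee,-)=-D_j(\alpha_i^\vee,-)$ for all $i,j$, hence total antisymmetry. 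I recommend you abandon the coordinate expansion and the subdiagram list (which, incidentally, also omits the rank-$2$ degenerate cases $c\in\{i,j\}$) and prove the statement this way.
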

\begin{proof}
We will represent elements of $\oplus_{W^{(1)}} \wedge^2 X$ by antisymmetric forms $D_i(-, -)$ on $Y$. The closed elements in $E_0^{-3, 4}$ are antisymmetric forms $D_i$ that satisfy
\[D_i(\alpha_j^\vee, \alpha_k^\vee) + D_j(s_j(\alpha_i^\vee), \alpha_k^\vee) = 0\]
for every coroots $\alpha_i^\vee, \alpha_j^\vee, \alpha_k^\vee$. The form $D_i(-, -)$ is exact if its trilinear extension $D(-, -, -)$ is totally antisymmetric.

We expand the closedness equation as
\[D_i(\alpha_j^\vee, \alpha_k^\vee) + D_j(\alpha_i^\vee, \alpha_k^\vee) - \alpha_j(\alpha_i^\vee)D_j(\alpha_j^\vee, \alpha_k^\vee) = 0.\]

Therefore, $D_i(\alpha_j^\vee, \alpha_k^\vee) = -D_j(\alpha_i^\vee, \alpha_k^\vee)$ if $\alpha_i$ and $\alpha_j$ are orthogonal.

To show that $D(-, -, -)$ is antisymmetric in the first two entries for any arguments we just need to show that $D_j(\alpha_j^\vee, \alpha_k^\vee) = 0$ for any $\alpha_j^\vee$ and $\alpha_k^\vee$.

Indeed,
\begin{align*}
D_j(\alpha_j^\vee, \alpha_k^\vee) &= -D_j(\alpha_k^\vee, \alpha_j^\vee) \\
&= D_k(\alpha_j^\vee, \alpha_j^\vee) - \alpha_j(\alpha_k^\vee)D_j(\alpha_j^\vee, \alpha_j^\vee) = 0.
\end{align*}

Here in the first line we have used the antisymmetry of $D_j$ and in the second line we have used the closedness condition.

Therefore, any closed form $D$ is antisymmetric in the first two and the last two arguments, hence it is totally antisymmetric.
\end{proof}

\begin{prop}
The cohomology $E_1^{-3, 6}$ is $(\Z/2\Z)^{n_{\BDG}}$, where $n_{\BDG}$ is the number of times a subdiagram of type $\mG_2$, $\mB_3$ or $\mD_4$ appears in the Dynkin diagram of $G$.
\end{prop}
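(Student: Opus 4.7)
By definition, $E_1^{-3,6}$ is the cokernel of the $E_0$-differential
\[d\colon \bigoplus_{w\in W^{(2)}} X \longrightarrow \bigoplus_{w_1\in W^{(3)}} \Z.\]
The plan is first to unpack $d$ using the description in Section~2.1: a summand $X$ at $w = w_0 s_i s_j$ maps to a summand $\Z$ at a covering $w_1$ by evaluation against a Weyl translate $(w'')^{-1}(\alpha_k^\vee)$ of a simple coroot, whenever $w_1$ is obtained from $w$ by inserting one simple reflection; every matrix entry of $d$ is therefore an integer pairing of a weight against a positive coroot lying in the subsystem spanned by the simple roots appearing in $w_1$.

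The key simplification is a locality observation: each $w_1 \in W^{(3)}$ is a word in at most three distinct simple reflections, and the only nonzero components $d_w^{w_1}$ come from $w \in W^{(2)}$ supported on the same simple roots. Thus the complex decomposes as a direct sum of subcomplexes indexed by connected subdiagrams $\Delta' \subset \Delta$ of rank at most three, together with $\mD_4$: the latter is needed because a length-$3$ element of the form $s_{i_1}s_{i_2}s_{i_3}$ with three mutually orthogonal factors arises only when those three simple reflections share a common neighbour, i.e.\ in the presence of a $\mD_4$ subdiagram. Each connected subdiagram of this kind contributes independently to $E_1^{-3,6}$, so the computation reduces to a finite list of irreducible rank $\le 4$ diagrams.

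I would then proceed by direct case analysis. For $\mA_n$ with $n \le 3$ and for $\mC_3$, the map $d$ turns out to be surjective onto the $\Z$-summands supported on that subdiagram, by explicit enumeration of $W^{(2)}$- and $W^{(3)}$-elements using the identifications from the preceding lemma; these diagrams contribute nothing. For each $\mB_3$ and $\mG_2$ subdiagram, a single Cartan integer of absolute value $2$ (respectively $3$) governs the relevant coordinate of the cokernel and forces the image to have index exactly $2$, so each such subdiagram contributes one copy of $\Z/2\Z$. For each $\mD_4$ subdiagram, the extra $\Z/2\Z$ is supported on the class of the length-$3$ element $s_{i_1}s_{i_2}s_{i_3}$ formed by the three outer nodes: the three Cartan integers pairing each outer root with the central coroot are all $-1$, and an elementary computation shows that their combined image on the corresponding $\Z$-summand has index $2$.

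The hard part is the explicit bookkeeping in the $\mB_3$, $\mG_2$ and $\mD_4$ cases: one must choose representatives for the equivalence classes in $W^{(3)}$ respecting the braid and commutation relations, identify the columns of $d$ correctly, and then row-reduce a small integer matrix. Once these verifications are carried out, the direct sum decomposition over irreducible subdiagrams of the Dynkin diagram yields the claimed isomorphism $E_1^{-3,6} \cong (\Z/2\Z)^{n_{\BDG}}$.
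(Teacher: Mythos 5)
Your overall strategy---compute the cokernel of $\oplus_{W^{(2)}}X\to\oplus_{W^{(3)}}\Z$ by analyzing small subdiagrams and locating mod-$2$ obstructions at $\mG_2$, $\mB_3$ and $\mD_4$---is the right one and matches the paper in spirit, but the step meant to justify the reduction, namely the claimed direct sum decomposition of the complex over connected subdiagrams of rank at most three (plus $\mD_4$), is false, and this is precisely where the real work lives. The target does split according to the support of the $W^{(3)}$-element, but the source does not: a single summand $\phi_{ij}\in X$ at $w_0s_is_j\in W^{(2)}$ feeds into the target coordinate $d_{ijk}$ for \emph{every} $k$, and in particular the ``diagonal'' values $\phi_{ij}(\alpha_i^\vee)$, $\phi_{ij}(\alpha_j^\vee)$ enter both the rank-two relations for $d_{iji},d_{jij}$ and the rank-three relations for every triple containing the adjacent pair $\{i,j\}$. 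Since the image of $d$ is only \emph{contained in} the direct sum of the local images, your local case analysis yields at best a surjection $E_1^{-3,6}\twoheadrightarrow(\Z/2\Z)^{n_{\BDG}}$; to get an isomorphism you must show that whenever all local obstructions vanish the shared variables admit a simultaneous consistent choice, so that no further global parity conditions arise. That global consistency argument---ordering the simple roots in each component, normalizing $\phi_{ij}(\alpha_j^\vee)=0$, and propagating the remaining free parameters along the type-$\mA$ tails---is the bulk of the proof and cannot be deferred as ``bookkeeping.''

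The $\mD_4$ discussion is also wrong in substance. Length-$3$ elements with three mutually orthogonal factors occur whenever the diagram contains three mutually orthogonal simple roots (e.g.\ already in $\mA_5$), not only when those roots share a common neighbour, and the corresponding equation \eqref{ijk3} is always solvable, so these elements contribute nothing. The genuine $\mD_4$ obstruction is not supported on the $\Z$-summand indexed by the triple of outer nodes; it is the parity condition obtained by summing the three solvability conditions \eqref{ijk1solvability} for the triples (outer, centre, outer), each of which contains exactly one orthogonal pair---see \eqref{typeDsystem}. Finally, a complete case list must also handle the remaining $W^{(3)}$ classes (totally disconnected triples, and an adjacent pair together with a root orthogonal to both) and the rank-two double-bond diagram $\mB_2=\mC_2$; these are all unobstructed, but verifying that is part of the computation rather than something the decomposition lets you skip.
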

\begin{proof}
Suppose that one is given a collection of integers $\{d_{ijk}\}\in E_0^{-3, 6}$, where $i, j, k$ parametrize the coroots, defined for unequal adjacent indices satisfying
\begin{itemize}
\item $d_{ijk} = d_{jik}$ if $i$ is orthogonal to $j$,
\item $d_{ijk} = d_{ikj}$ if $j$ is orthogonal to $k$,
\item $d_{iji} = d_{jij}$ if $i$ and $j$ are connected by a single edge.
\end{itemize}

Note, that $d_{iji}$ is undefined if $\alpha_i^\vee$ and $\alpha_j^\vee$ are orthogonal.

The collection $\{d_{ijk}\}\in E_0^{-3, 6}$ is exact if the system
\begin{equation}
d_{ijk} = \phi_{ij}(\alpha_k^\vee) + \phi_{ik}(\alpha_j^\vee - \alpha_k(\alpha_j^\vee) \alpha_k^\vee) + \phi_{jk}(\alpha_i^\vee - \alpha_j(\alpha_i^\vee)\alpha_j^\vee - \alpha_k(\alpha_i^\vee)\alpha_k^\vee + \alpha_k(\alpha_j^\vee)\alpha_j(\alpha_i^\vee) \alpha_k^\vee)
\end{equation}
has a solution for some elements $\phi_{ij}\in X$ satisfying $\phi_{ij} = \phi_{ji}$ for orthogonal roots. Note that $\phi_{ii} = 0$.

First, consider a pair of simple roots $i,j$. From the classification of semisimple groups we have the following cases:
\begin{itemize}
\item $\alpha_j(\alpha_i^\vee) = 0$, i.e. the roots $i$ and $j$ are orthogonal. In this case we let \[\phi_{ij}(\alpha_i^\vee) = \phi_{ji}(\alpha_i^\vee) = 0.\]

\item $\alpha_j(\alpha_i^\vee) = \alpha_i(\alpha_j^\vee) = -1$, i.e. the roots $i$ and $j$ are connected by a single edge. Then we must satisfy the relation
\begin{equation}
d_{iji} = \phi_{ij}(\alpha_i^\vee) + \phi_{ji}(\alpha_j^\vee).
\label{ijitypeA}
\end{equation}

This obviously has a solution for any $d_{iji}$. Note that the relation coming from $d_{jij}$ is the same one.

\item $\alpha_j(\alpha_i^\vee) = -2$ and $\alpha_i(\alpha_j^\vee) = -1$, i.e. the roots $i$ and $j$ are connected by a double edge. We must satisfy the relations
\begin{equation}
\begin{aligned}
d_{iji} &= \phi_{ij}(\alpha_i^\vee) + \phi_{ji}(\alpha_i^\vee) + 2\phi_{ji}(\alpha_j^\vee)\\
d_{jij} &= \phi_{ji}(\alpha_j^\vee) + \phi_{ij}(\alpha_j^\vee) + \phi_{ij}(\alpha_i^\vee).
\end{aligned}
\label{ijitypeB}
\end{equation}

This system has a unique solution for any $\phi_{ji}(\alpha_i^\vee)$ and $\phi_{ij}(\alpha_j^\vee)$:
\begin{align*}
\phi_{ji}(\alpha_j^\vee) &= d_{iji} - d_{jij} + \phi_{ij}(\alpha_j^\vee) - \phi_{ji}(\alpha_i^\vee) \\
\phi_{ij}(\alpha_i^\vee) &= 2d_{jij} - d_{iji} - 2\phi_{ij}(\alpha_j^\vee) + \phi_{ji}(\alpha_i^\vee).
\end{align*}

\item $\alpha_j(\alpha_i^\vee) = -3$ and $\alpha_i(\alpha_j^\vee) = -1$, i.e. the roots $i$ and $j$ are connected by a triple edge. We must satisfy the relations
\begin{equation}
\begin{aligned}
d_{iji} &= \phi_{ij}(\alpha_i^\vee) + 2\phi_{ji}(\alpha_i^\vee) + 3\phi_{ji}(\alpha_j^\vee)\\
d_{jij} &= \phi_{ji}(\alpha_j^\vee) + 2\phi_{ij}(\alpha_j^\vee) + \phi_{ij}(\alpha_i^\vee).
\end{aligned}
\label{ijitypeG}
\end{equation}

This system has a unique solution for any $\phi_{ji}(\alpha_i^\vee)$ and $\phi_{ij}(\alpha_j^\vee) $ if $d_{iji} - d_{jij}$ is even and it has no solutions if the difference $d_{iji} - d_{jij}$ is odd. Therefore, the elements with $d_{iji} - d_{jij}$ odd represent a nontrivial cohomology class if $\alpha_j(\alpha_i^\vee)=-3$.
\end{itemize}

Let us now see what happens for three distinct simple roots $i,j,k$. There are four cases depending on the number of pairs of orthogonal roots among $i,j,k$:
\begin{itemize}
\item All three roots are orthogonal. Then we have to solve the equation
\begin{equation}
d_{ijk} = \phi_{ij}(\alpha_k^\vee) + \phi_{ik}(\alpha_j^\vee) + \phi_{jk}(\alpha_i^\vee).
\label{ijk3}
\end{equation}

It clearly has a solution.

\item There are two orthogonal pairs which we assume to be $(j,k)$ and $(i,k)$. Then we have to solve the system
\begin{equation}
\begin{aligned}
d_{ijk} &= \phi_{ij}(\alpha_k^\vee) + \phi_{ik}(\alpha_j^\vee) + \phi_{jk}(\alpha_i^\vee) - \alpha_j(\alpha_i^\vee)\phi_{jk}(\alpha_j^\vee) \\
d_{kji} &= \phi_{jk}(\alpha_i^\vee) + \phi_{ik}(\alpha_j^\vee) + \phi_{ji}(\alpha_k^\vee) - \alpha_i(\alpha_j^\vee)\phi_{ik}(\alpha_i^\vee)
\end{aligned}
\label{ijk2}
\end{equation}
for the four variables $\phi_{ij}(\alpha_k^\vee), \phi_{ik}(\alpha_j^\vee), \phi_{jk}(\alpha_i^\vee), \phi_{ji}(\alpha_k^\vee)$. It again obviously has a solution.

\item There is a single pair of orthogonal roots which we assume to be $(i, k)$. Then we have to solve the system
\begin{equation}
\begin{aligned}
d_{ijk} &= \phi_{ij}(\alpha_k^\vee) + \phi_{ik}(\alpha_j^\vee) + \phi_{jk}(\alpha_i^\vee) - \alpha_k(\alpha_j^\vee)\phi_{ik}(\alpha_k^\vee) - \alpha_j(\alpha_i^\vee)\phi_{jk}(\alpha_j^\vee) + \alpha_k(\alpha_j^\vee)\alpha_j(\alpha_i^\vee)\phi_{jk}(\alpha_k^\vee) \\
d_{ikj} &= \phi_{ik}(\alpha_j^\vee) + \phi_{ij}(\alpha_k^\vee) + \phi_{kj}(\alpha_i^\vee) - \alpha_j(\alpha_k^\vee)\phi_{ij}(\alpha_j^\vee) - \alpha_j(\alpha_i^\vee)\phi_{kj}(\alpha_j^\vee) \\
d_{jki} &= \phi_{jk}(\alpha_i^\vee) + \phi_{ji}(\alpha_k^\vee) + \phi_{ik}(\alpha_j^\vee) - \alpha_k(\alpha_j^\vee)\phi_{ik}(\alpha_k^\vee) - \alpha_i(\alpha_j^\vee)\phi_{ik}(\alpha_i^\vee) \\
d_{kji} &= \phi_{kj}(\alpha_i^\vee) + \phi_{ik}(\alpha_j^\vee) + \phi_{ji}(\alpha_k^\vee) - \alpha_i(\alpha_j^\vee)\phi_{ik}(\alpha_i^\vee) - \alpha_j(\alpha_k^\vee)\phi_{ji}(\alpha_j^\vee) + \alpha_i(\alpha_j^\vee)\alpha_j(\alpha_k^\vee)\phi_{ji}(\alpha_i^\vee)
\end{aligned}
\label{ijk1}
\end{equation}
for the five variables $\phi_{ij}(\alpha_k^\vee), \phi_{ik}(\alpha_j^\vee), \phi_{jk}(\alpha_i^\vee), \phi_{kj}(\alpha_i^\vee), \phi_{ji}(\alpha_k^\vee)$.

This system has a solution iff
\begin{equation}
\begin{aligned}
&d_{ijk} + d_{kji} + \alpha_j(\alpha_i^\vee)\phi_{jk}(\alpha_j^\vee) - \alpha_k(\alpha_j^\vee)\alpha_j(\alpha_i^\vee)\phi_{jk}(\alpha_k^\vee) + \alpha_j(\alpha_k^\vee)\phi_{ji}(\alpha_j^\vee) - \alpha_i(\alpha_j^\vee)\alpha_j(\alpha_k^\vee)\phi_{ji}(\alpha_i^\vee) \\
&= d_{ikj} + d_{jki} + \alpha_j(\alpha_k^\vee)\phi_{ij}(\alpha_j^\vee) + \alpha_j(\alpha_i^\vee)\phi_{kj}(\alpha_j^\vee).
\end{aligned}
\label{ijk1solvability}
\end{equation}

\item None of the roots are orthogonal. This cannot happen as it would imply that the Dynkin diagram of $G$ has a 2-cycle.
\end{itemize}

To construct a solution $\phi_{ij}(-)$, we run the following algorithm. For each three orthogonal roots $i,j,k$ solve \eqref{ijk3} for $\phi_{ij}(\alpha_k^\vee)$.

Decompose the Dynkin diagram of $G$ into connected components. For each component we do the following. First, for each pair of adjacent roots $i,j$ in the diagram and any other orthogonal root $k$ we solve \eqref{ijk2} for $\phi_{ij}(\alpha_k^\vee)$. We set $\phi_{ij}(\alpha_j^\vee) = 0$. Then depending on the component type we define $\phi_{ij}(\alpha_k^\vee)$ in the following way.

\proofstep{Type $\mA_n$ ($n\geq 3$).} Linearly order the simple roots. For each triple of simple roots $i, j = i+1, k = i+2$ we have to solve
\begin{align*}
d_{ijk} + d_{kji} - \phi_{jk}(\alpha_j^\vee) - \phi_{ji}(\alpha_j^\vee) &= d_{ikj} + d_{jki}\\
d_{iji} &= \phi_{ij}(\alpha_i^\vee) + \phi_{ji}(\alpha_j^\vee)\\
d_{jkj} &= \phi_{jk}(\alpha_j^\vee) + \phi_{kj}(\alpha_k^\vee).
\end{align*}

Set $\phi_{21}(\alpha_2^\vee)=0$. Then for each triple $i$, $j=i+1$ and $k=i+2$ of simple roots in this component we can iteratively solve the system as follows. From the first equation one uniquely determines $\phi_{jk}(\alpha_j^\vee)$. From the last two equations one uniquely determines $\phi_{ij}(\alpha_i^\vee)$ and $\phi_{kj}(\alpha_k^\vee)$.

\proofstep{Type $\mB_n$ ($n\geq 3$).} Linearly order the simple roots starting from the shortest root $i = 1$. For the triple of roots $(i=1, j=2, k=3)$ we get the following equations:
\begin{equation}
\begin{aligned}
d_{321} + d_{123} - \phi_{21}(\alpha_2^\vee) - 2\phi_{23}(\alpha_2^\vee) &= d_{312} + d_{213}\\
d_{212} &= \phi_{21}(\alpha_2^\vee) + \phi_{12}(\alpha_1^\vee)\\
d_{121} &= \phi_{12}(\alpha_1^\vee) + 2\phi_{21}(\alpha_2^\vee)\\
d_{323} &= \phi_{32}(\alpha_3^\vee) + \phi_{23}(\alpha_2^\vee).
\end{aligned}
\label{typeBsystem}
\end{equation}

Let us substitute $\phi_{21}(\alpha_2^\vee)$ from the second and third equations into the first one. Then we obtain
\[d_{321} + d_{123} - d_{121} + d_{212} - 2\phi_{23}(\alpha_2^\vee) = d_{312} + d_{213}.\]

This equation has a solution iff $d_{321} + d_{123} - d_{121} + d_{212} - d_{312} - d_{213}$ is even. Moreover, the whole system \eqref{typeBsystem} has a solution in this case. Note that even if we did not set $\phi_{ij}(\alpha_j^\vee) = 0$ we would still not be able to solve the equation unless the same condition is satisfied. For the other triple of adjacent roots we have to solve the same equations as in the type $\mA$ case.

\proofstep{Type $\mC_n$ ($n\geq 3$).} Linearly order the simple roots starting from the \textit{longest} root $i = 1$. Then for the triple of roots $(i = 1, j = 2, k = 3)$ we get the following equations:
\begin{equation}
\begin{aligned}
d_{123} + d_{321} - \phi_{23}(\alpha_2^\vee) - \phi_{21}(\alpha_2^\vee) &= d_{132} + d_{231}\\
d_{121} &= \phi_{12}(\alpha_1^\vee) + \phi_{21}(\alpha_2^\vee)\\
d_{212} &= \phi_{21}(\alpha_2^\vee) + 2\phi_{12}(\alpha_1^\vee) \\
d_{232} &= \phi_{23}(\alpha_2^\vee) + \phi_{32}(\alpha_3^\vee).
\end{aligned}
\label{typeCsystem}
\end{equation}

Substituting $\phi_{21}(\alpha_2^\vee)$ from the middle two equations into the first one, we obtain
\[d_{123} + d_{321} - \phi_{23}(\alpha_2^\vee) - 2d_{121} + d_{212} = d_{132} + d_{231}.\]

From this equation we determine $\phi_{23}(\alpha_2^\vee)$ and we uniquely determine $\phi_{32}(\alpha_3^\vee)$, $\phi_{12}(\alpha_1^\vee)$ and $\phi_{21}(\alpha_2^\vee)$ from the rest of the equations in the system \eqref{typeCsystem}. We proceed with the rest of the diagram as in type $\mA$.

\proofstep{Type $\mD_n$ ($n\geq 4$) or $\mE_n$.} We order the diagram as in the picture:
\begin{center}
\begin{figure}[h]
\begin{minipage}{0.45\textwidth}
\[
\xymatrix{
*+[o][F]{1} \ar@{-}[dr] & & & \\
& *+[o][F]{2} \ar@{-}[r] & *+[o][F]{4} \ar@{-}[r] & ... \\
*+[o][F]{3} \ar@{-}[ur] & & &}
\]
\end{minipage}
\begin{minipage}{0.45\textwidth}
\[
\xymatrix{
& & *+[o][F]{1} \ar@{-}[d] & & \\
*+[o][F]{5} \ar@{-}[r] & *+[o][F]{4} \ar@{-}[r] & *+[o][F]{2} \ar@{-}[r] & *+[o][F]{3} \ar@{-}[r] & ...
}
\]
\end{minipage}
\end{figure}
\end{center}

The first four roots give the equations

\begin{align*}
d_{123} + d_{321} - \phi_{23}(\alpha_2^\vee) - \phi_{23}(\alpha_3^\vee) - \phi_{21}(\alpha_2^\vee) - \phi_{21}(\alpha_1^\vee) &= d_{132} + d_{231} - \phi_{12}(\alpha_2^\vee) - \phi_{32}(\alpha_2^\vee) \\
d_{124} + d_{421} - \phi_{24}(\alpha_2^\vee) - \phi_{24}(\alpha_4^\vee) - \phi_{21}(\alpha_2^\vee) - \phi_{21}(\alpha_1^\vee) &= d_{142} + d_{241} - \phi_{12}(\alpha_2^\vee) - \phi_{42}(\alpha_2^\vee) \\
d_{324} + d_{423} - \phi_{24}(\alpha_2^\vee) - \phi_{24}(\alpha_4^\vee) - \phi_{23}(\alpha_2^\vee) - \phi_{23}(\alpha_3^\vee) &= d_{342} + d_{243} - \phi_{32}(\alpha_2^\vee) - \phi_{42}(\alpha_2^\vee).
\end{align*}

If we add the three equations, we see that the system has solutions only if
\[d_{123} + d_{321} - d_{132} - d_{231} + d_{124} + d_{421} - d_{142} - d_{241} + d_{324} + d_{423} - d_{342} - d_{243}\]
is even.

Let us now show that if this expression is even, we indeed have a solution. Set
\[\phi_{12}(\alpha_2^\vee) = \phi_{21}(\alpha_1^\vee) = \phi_{23}(\alpha_3^\vee) = \phi_{32}(\alpha_2^\vee) = \phi_{24}(\alpha_4^\vee) = \phi_{42}(\alpha_2^\vee) = 0.\]

Then the equations become
\begin{equation}
\begin{aligned}
d_{123} + d_{321} - \phi_{23}(\alpha_2^\vee) - \phi_{21}(\alpha_2^\vee) &= d_{132} + d_{231} \\
d_{124} + d_{421} - \phi_{24}(\alpha_2^\vee) - \phi_{21}(\alpha_2^\vee) &= d_{142} + d_{241} \\
d_{324} + d_{423} - \phi_{24}(\alpha_2^\vee) - \phi_{23}(\alpha_2^\vee) &= d_{342} + d_{243}.
\end{aligned}
\label{typeDsystem}
\end{equation}

From the last equation we have
\[\phi_{24}(\alpha_2^\vee) = d_{324} + d_{423} - d_{342} - d_{243} - \phi_{23}(\alpha_2^\vee).\]
Plug it into the second equation:
\[d_{124} + d_{421} - d_{324} - d_{423} + d_{342} + d_{243} + \phi_{23}(\alpha_2^\vee) - \phi_{21}(\alpha_2^\vee) = d_{142} + d_{241}.\]

This and the first equation in \eqref{typeDsystem} have a unique solution under the assumption that the expression written before is even. The tails of the diagram are treated in the same way as in type $\mA$.

\proofstep{Type $\mF_4$.} We order the four simple roots in the following way:
\begin{center}
\begin{figure}[h]
\[
\xymatrix{
*+[o][F]{1} \ar@{-}[r] & *+[o][F]{2} \ar@{=}[r]|*{>} & *+[o][F]{3} \ar@{-}[r] & *+[o][F]{4}
}
\]
\end{figure}
\end{center}

As in type $\mB$, we can solve the system \eqref{typeBsystem} iff $d_{321} + d_{123} - d_{121} + d_{212} - d_{312} - d_{213}$ is even. We get a system of equations identical to \eqref{typeCsystem} for the roots $234$, which always has a solution.

\proofstep{Diagram has rank 2.} We determine $\phi_{ij}(\alpha_i^\vee)$ and $\phi_{ji}(\alpha_j^\vee)$ from the equations \eqref{ijitypeA}, \eqref{ijitypeB}, \eqref{ijitypeG}.

In this way we have determined $\phi_{ij}(\alpha_k^\vee)$ for any three simple roots unless there is a subdiagram of type $\mB_3$ (in types $\mB$ and $\mF$), $\mD_4$ (in types $\mD$ and $\mE$) or $\mG_2$ in which case an extra solvability condition is required.
\end{proof}

\begin{remark}
The fact that $\CH^3(G)$ is 2-torsion follows from the following argument (\cite[Proof of Proposition 3.20]{EKLV}). Levine \cite[Theorem 2.1]{Lev} shows that under our assumptions $\K_0(G)\cong \Z$ with the trivial coniveau (or topological) filtration. There is a surjective map \[\pi_3\colon \CH^3(G)\rightarrow \gr^3 \K_0(G)\cong 0\] (see \cite[Exp. 0, Th\'{e}or\`{e}mes 2.6, 2.12]{SGA6}) given by sending a cycle to the skyscraper sheaf. There is the third Chern class map $c_3$ in the other direction
\[c_3\colon \gr_3 \K_0(G)\rightarrow \CH^3(G).\]
By the Grothendieck-Riemann-Roch theorem without denominators we have $c_3\pi_3 = 2\id$. In particular, the kernel of $\pi_3$, which is the whole $\CH^3(G)$ in our case, is annihilated by $2$.
\end{remark}

We now proceed to the computation of the cohomology $E_1^{-3, 5}$.

Given a cubic form $C(-)$ on a lattice $Y$, we have the associated quadratic-linear form
\[B(y_1, y_2) = \frac{C(y_2 + y_1) + C(y_2 - y_1)}{2} - C(y_2),\]
which is quadratic in the first argument and linear in the second, and the associated trilinear form
\[T(y_1, y_2, y_3) = C(y_1 + y_2 + y_3) - C(y_1 + y_2) - C(y_2 + y_3) - C(y_1 + y_3) + C(y_1) + C(y_2) + C(y_3).\]
Note that $B$ is an integer-valued form despite the factor of $1/2$.

We have the following relations between $B$ and $T$:
\begin{equation}
T(y_1, y_1, y_2) = 2B(y_1, y_2),\quad B(y_1 + y_2, y_3) = B(y_1, y_3) + B(y_2, y_3) + T(y_1, y_2, y_3).
\label{BTrelation}
\end{equation}

If $\{\alpha_i^\vee\}_i$ is a basis of $Y$, the cubic form $C$ is uniquely determined by the values $C(\alpha_i^\vee)$, $B(\alpha_i^\vee, \alpha_j^\vee)$ for $i\neq j$ and $T(\alpha_i^\vee, \alpha_j^\vee, \alpha_k^\vee)$ for $i,j,k$ distinct.

\begin{prop}
\label{prop:Winvcubic}
A cubic form $C$ on the cocharacter lattice $Y$ is $W$-invariant iff
\begin{enumerate}
\item $C(\alpha_i^\vee) = 0$ for all simple coroots $\alpha_i^\vee$

\item $B(\alpha_j^\vee, \alpha_i^\vee) = \alpha_i(\alpha_j^\vee) B(\alpha_i^\vee, \alpha_j^\vee)$ for distinct simple coroots $\alpha_i^\vee$ and $\alpha_j^\vee$ and

\item
\begin{align*}
0 &= T(\alpha_i^\vee, \alpha_j^\vee, \alpha_k^\vee),\\
0 &= \alpha_i(\alpha_j^\vee)B(\alpha_i^\vee, \alpha_k^\vee) + \alpha_i(\alpha_k^\vee)B(\alpha_i^\vee, \alpha_j^\vee)
\end{align*} for distinct simple coroots $\alpha_i^\vee, \alpha_j^\vee, \alpha_k^\vee$.
\end{enumerate}
\end{prop}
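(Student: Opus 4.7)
The plan is to exploit the fact that $W$ is generated by the simple reflections $s_i$, so $W$-invariance of $C$ is equivalent to $C(s_i y) = C(y)$ for every simple reflection $s_i$ and every $y \in Y$. For any homogeneous cubic form one has the identity $C(y_1 + y_2) = C(y_1) + C(y_2) + B(y_1, y_2) + B(y_2, y_1)$; substituting $y_1 = y$, $y_2 = -\alpha_i(y)\alpha_i^\vee$ and using the homogeneity of $B$ and $C$ in each slot gives
\[
C(s_i y) - C(y) = -\alpha_i(y)^3\, C(\alpha_i^\vee) + \alpha_i(y)^2\, B(\alpha_i^\vee, y) - \alpha_i(y)\, B(y, \alpha_i^\vee),
\]
a cubic polynomial in $y$ whose identical vanishing is equivalent to $s_i$-invariance of $C$.

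By the uniqueness statement for cubic forms in the data $\{C(\alpha_i^\vee),\, B(\alpha_i^\vee, \alpha_j^\vee)_{i\neq j},\, T(\alpha_i^\vee, \alpha_j^\vee, \alpha_k^\vee)_{i,j,k\text{ distinct}}\}$ recalled just before the proposition, it suffices to expand $y = \sum_k c_k \alpha_k^\vee$ with at most three non-zero $c_k$ and test vanishing of the above polynomial coefficient by coefficient. Evaluating at $y = \alpha_i^\vee$, where $s_i(\alpha_i^\vee) = -\alpha_i^\vee$, yields $-2 C(\alpha_i^\vee) = 0$ by homogeneity of $C$, which is condition~(1). Granting~(1), substituting $y = c_i \alpha_i^\vee + c_j \alpha_j^\vee$ for $i \neq j$ and expanding $B$ via \eqref{BTrelation}, the coefficient of $c_i c_j^2$ vanishes precisely when $B(\alpha_j^\vee, \alpha_i^\vee) = \alpha_i(\alpha_j^\vee) B(\alpha_i^\vee, \alpha_j^\vee)$, which is condition~(2).

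For the third condition, assume (1) and (2) and take $y = c_j \alpha_j^\vee + c_k \alpha_k^\vee$ for $i, j, k$ pairwise distinct. A direct expansion shows that the coefficient of $c_j^2 c_k$ factors as
\[
\alpha_i(\alpha_j^\vee) \bigl[\alpha_i(\alpha_j^\vee) B(\alpha_i^\vee, \alpha_k^\vee) + \alpha_i(\alpha_k^\vee) B(\alpha_i^\vee, \alpha_j^\vee) - T(\alpha_i^\vee, \alpha_j^\vee, \alpha_k^\vee)\bigr],
\]
while the coefficient of $c_j c_k^2$ is the same bracketed expression multiplied by $\alpha_i(\alpha_k^\vee)$. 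Combining these with the analogous equations arising from $s_j$- and $s_k$-invariance, and using that Dynkin diagrams of finite Weyl groups contain no triangles (so at least one pair among $\alpha_i, \alpha_j, \alpha_k$ is orthogonal, which by~(2) forces some $B$-values to vanish), a short case analysis on the orthogonality pattern recovers both $T(\alpha_i^\vee, \alpha_j^\vee, \alpha_k^\vee) = 0$ and $\alpha_i(\alpha_j^\vee) B(\alpha_i^\vee, \alpha_k^\vee) + \alpha_i(\alpha_k^\vee) B(\alpha_i^\vee, \alpha_j^\vee) = 0$ individually, which is condition~(3). Conversely, if (1)--(3) hold then every coefficient of the polynomial $C(s_i y) - C(y)$ vanishes in the basis expansion, so it vanishes identically. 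The main difficulty will lie in this final case analysis: for a single simple reflection~$s_i$, the relations obtained are only that the bracketed expression above vanishes when multiplied by $\alpha_i(\alpha_j^\vee)$ or $\alpha_i(\alpha_k^\vee)$, and isolating the two parts of condition~(3) separately requires synthesizing the constraints from all three of $s_i, s_j, s_k$ together with~(2) and the tree structure of finite-type Dynkin diagrams.
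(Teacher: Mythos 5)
Your setup is sound and agrees with the paper's: reduce to invariance under each simple reflection, write
\[
C(s_i y) - C(y) = -\alpha_i(y)^3 C(\alpha_i^\vee) + \alpha_i(y)^2 B(\alpha_i^\vee, y) - \alpha_i(y) B(y, \alpha_i^\vee),
\]
and extract conditions (1) and (2) from the one- and two-variable substitutions; those computations check out. But there is a genuine gap in your derivation of condition (3). By substituting only $y = c_j\alpha_j^\vee + c_k\alpha_k^\vee$, every relation you obtain carries a prefactor of $\alpha_i(\alpha_j^\vee)$ or $\alpha_i(\alpha_k^\vee)$. When $\alpha_i, \alpha_j, \alpha_k$ are pairwise orthogonal (which happens for most triples in a Dynkin diagram of rank $\geq 3$, e.g.\ $\alpha_1,\alpha_3,\alpha_5$ in $\mA_5$), \emph{all} of these prefactors vanish for all three reflections $s_i, s_j, s_k$ — indeed $\alpha_i(y)\equiv 0$ on the span of $\alpha_j^\vee,\alpha_k^\vee$, so the restricted polynomial is identically zero and yields nothing. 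No synthesis of the constraints from $s_i$, $s_j$, $s_k$ with condition (2) and the tree structure can then recover $T(\alpha_i^\vee,\alpha_j^\vee,\alpha_k^\vee)=0$, which is nevertheless a genuine consequence of $W$-invariance and part of the statement. So the case analysis you defer to the end cannot be closed as described.

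The repair is within your own framework: you must also use the three-variable substitution $y = c_i\alpha_i^\vee + c_j\alpha_j^\vee + c_k\alpha_k^\vee$ and read off the coefficient of $c_ic_jc_k$, which equals
\[
2\bigl(\alpha_i(\alpha_j^\vee)B(\alpha_i^\vee,\alpha_k^\vee) + \alpha_i(\alpha_k^\vee)B(\alpha_i^\vee,\alpha_j^\vee) - T(\alpha_i^\vee,\alpha_j^\vee,\alpha_k^\vee)\bigr)
\]
with \emph{no} extraneous multiplier; in the pairwise-orthogonal case this gives $-2T=0$ directly. This is in effect what the paper does, more cleanly: it first cancels one factor of $\alpha_i(y)$ from $C(s_iy)-C(y)$ (legitimate since $\alpha_i\neq 0$ and the polynomial algebra is a domain), leaving the quadratic identity $0 = -B(y,\alpha_i^\vee) + \alpha_i(y)B(\alpha_i^\vee,y)$, whose cross term on $y=a\alpha_j^\vee+b\alpha_k^\vee$ is exactly the bracket above. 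The subsequent split of $T_{ijk} = \alpha_i(\alpha_j^\vee)B(\alpha_i^\vee,\alpha_k^\vee)+\alpha_i(\alpha_k^\vee)B(\alpha_i^\vee,\alpha_j^\vee)$ into the two separate equations of condition (3), using that some pair among any three simple roots is orthogonal, then works as you anticipate.
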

\begin{proof}
A cubic form $C$ is $W$-invariant iff $C(s_i(y)) = C(y)$ for every $y\in Y$ and every simple reflection $s_i$. For instance, for $y = \alpha_i^\vee$ we get $C(-\alpha_i^\vee) = C(\alpha_i^\vee)$ which implies that $C(\alpha_i^\vee) = 0$.

Expanding the $W$-invariance condition, we get
\begin{align*}
C(y) &= C(s_i(y))\\
&= C(y - \alpha_i(y)\alpha_i^\vee)\\
&= C(y) - C(\alpha_i(y)\alpha_i^\vee) + B(y, -\alpha_i(y)\alpha_i^\vee) + B(-\alpha_i(y)\alpha_i^\vee, y).
\end{align*}

Since $\alpha_i$ is a non-zero functional, $W$-invariance is equivalent to $C(\alpha_i^\vee) = 0$ and
\[0 = -B(y, \alpha_i^\vee) + \alpha_i(y) B(\alpha_i^\vee, y).\]

This expression is quadratic in $y$, so it is enough to check it on $y = a\alpha_j^\vee + b\alpha_k^\vee$ for every integers $a$ and $b$. Then we get
\begin{align*}
0 =& -B(a\alpha_j^\vee + b\alpha_k^\vee, \alpha_i^\vee) + \alpha_i(y) B(\alpha_i^\vee, a\alpha_j^\vee + b\alpha_k^\vee) \\
=&-B(a\alpha_j^\vee, \alpha_i^\vee) - B(b\alpha_k^\vee, \alpha_i^\vee)-abT(\alpha_j^\vee, \alpha_k^\vee, \alpha_i^\vee) \\
&+ (a\alpha_i(\alpha_j^\vee) + b\alpha_i(\alpha_k^\vee))(aB(\alpha_i^\vee, \alpha_j^\vee) + bB(\alpha_i^\vee, \alpha_k^\vee)).
\end{align*}
Here we have used the relation \eqref{BTrelation} between $B$ and $T$.

This equation has to hold identically in $a$ and $b$. Therefore, $W$-invariance is equivalent to the following three equations:
\begin{align*}
0 &= C(\alpha_i^\vee) \\
0 &= -B(\alpha_j^\vee, \alpha_i^\vee) + \alpha_i(\alpha_j^\vee)B(\alpha_i^\vee, \alpha_j^\vee) \\
T(\alpha_i^\vee, \alpha_j^\vee, \alpha_k^\vee) &= \alpha_i(\alpha_j^\vee)B(\alpha_i^\vee, \alpha_k^\vee) + \alpha_i(\alpha_k^\vee) B(\alpha_i^\vee, \alpha_j^\vee).
\end{align*}

From the second equation we see that $B(\alpha_j^\vee, \alpha_i^\vee) = 0$ for orthogonal coroots $\alpha_i^\vee, \alpha_j^\vee$. Since in any triple of distinct simple coroots two are orthogonal, without loss of generality we will assume that $i$ and $k$ are orthogonal. Then from the last equation we get that $T(\alpha_i^\vee, \alpha_j^\vee, \alpha_k^\vee) = 0$ for any triple of distinct simple coroots.
\end{proof}

We have the following standard fact about cubic invariants which follows from examining the list of fundamental invariants of Weyl groups.

\begin{lm}
The abelian group of $W\!$-invariant cubic forms on $Y$ is free of rank equal to the number of type $\mA$ factors in $G$.
\label{lm:Winvcubicclass}
\end{lm}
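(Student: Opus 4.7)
The plan is to reduce to the case where $G$ is simple and then invoke the classical Shephard--Todd--Chevalley classification of fundamental $W$-invariants, together with the explicit tables of exponents.

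First, I would decompose $G$ into simple factors $G = \prod_a G_a$, giving $Y = \bigoplus_a Y_a$ and $W = \prod_a W_a$. A cubic form on $Y$ splits by multidegree into ``pure'' pieces (cubic in a single $Y_a$) and ``mixed'' pieces. Each mixed piece is linear in at least one factor $Y_a$, and $W_a$-invariance then forces such pieces to vanish, because each $W_a$ acts irreducibly on $Y_a \otimes \mathbb{Q}$ and therefore admits no nonzero invariant linear functional. This reduces the problem to counting $W_a$-invariant cubic forms on each $Y_a$ separately.

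Second, for each simple type I invoke the classical fact that $\mathbb{Q}[Y_a \otimes \mathbb{Q}]^{W_a}$ is a polynomial algebra whose generators have degrees equal to the exponents of the root system plus one. Scanning the tables, degree $3$ appears exactly once in type $\mA_n$ for $n \geq 2$ and not at all in types $\mA_1$, $\mB_n$, $\mC_n$, $\mD_n$ with $n \geq 4$ (where the smallest degree above $2$ is $\min(4, n) \geq 4$), $\mE_6$, $\mE_7$, $\mE_8$, $\mF_4$, or $\mG_2$; the exceptional coincidence $\mD_3 = \mA_3$ falls under the $\mA$ case. Thus the $\mathbb{Q}$-dimension of $W$-invariant cubic forms on $Y \otimes \mathbb{Q}$ equals the number of $\mA_n$ ($n \geq 2$) factors of $G$.

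Finally, to upgrade from $\mathbb{Q}$-dimension to free $\mathbb{Z}$-rank I would use two observations: the group of $\mathbb{Z}$-valued cubic forms on $Y$ is a finitely generated free abelian group, so any subgroup of it is free; and for each $\mA_n$ factor the function $\sum_{i=1}^{n+1} x_i^3$ restricted to the hyperplane $\{\sum x_i = 0\} \subset \mathbb{Z}^{n+1}$ is integer-valued and $S_{n+1}$-invariant, providing an explicit integral generator. Hence the rank of the $\mathbb{Z}$-module of $W$-invariant integer-valued cubic forms equals the rational dimension computed above. No step poses a serious obstacle; the point requiring the most care is the reduction to simple factors, where the absence of $W$-invariant mixed cubic forms must be justified rather than taken for granted.
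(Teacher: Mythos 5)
Your argument is correct and is essentially the paper's own: the paper offers no proof beyond appealing to the list of fundamental invariants of the Weyl groups, and your write-up (reduction to simple factors via the absence of invariant covectors in the reflection representation, the observation that $3$ occurs as a fundamental degree only for $\mA_n$ with $n\geq 2$ since every other product of fundamental invariants has degree at least $4$, and the integrality upgrade via $\sum_i x_i^3$ on the coroot lattice) supplies exactly the details that appeal presupposes. One small point in your favor: your count correctly excludes $\mA_1$ factors, whose only fundamental degree is $2$, so the lemma's phrase ``type $\mA$ factors'' must be read as $\mA_n$ with $n\geq 2$.
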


Our goal will be to show that $E_1^{-3, 5}$ is isomorphic to the group of $W$-invariant cubic forms on $Y$. We begin by defining a map from $E_0^{-3, 5}$ to cubic forms on $Y$.

Given a collection $\{\phi_{ij}\}_{i,j}\in E_0^{-3, 5}$ of elements $\phi_{ij}\in X$, we define a cubic form by
\[C(\alpha_i^\vee) = 0,\quad B(\alpha_i^\vee, \alpha_j^\vee) = \phi_{ij}(\alpha_i^\vee) - \phi_{ji}(\alpha_i^\vee) - \alpha_j(\alpha_i^\vee)\phi_{ij}(\alpha_j^\vee),\quad T(\alpha_i^\vee, \alpha_j^\vee, \alpha_k^\vee) = 0.\]

\begin{prop}
This map descends to an isomorphism from $E_1^{-3, 5}$ to the group of $W\!\text{-invariant}$ cubic forms on $Y$.
\end{prop}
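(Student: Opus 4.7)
My plan is to construct an inverse to the map $\bar{\Phi} \colon E_1^{-3,5} \to \{W\text{-invariant cubic forms on } Y\}$ by a case analysis by Dynkin type, similar to the one in the proposition computing $E_1^{-3,6}$ above.

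First I would check that the given formula defines a well-defined map $\Phi \colon E_0^{-3,5} \to \{\text{cubic forms on } Y\}$. Since $G$ is simply-connected, the simple coroots form a basis of $Y$, so a cubic form is freely determined by the values $C(\alpha_i^\vee)$, $B(\alpha_i^\vee, \alpha_j^\vee)$ for $i \neq j$, and $T(\alpha_i^\vee, \alpha_j^\vee, \alpha_k^\vee)$ for distinct $i, j, k$. The formula is compatible with the identification $\phi_{ij} = \phi_{ji}$ when $\alpha_i \perp \alpha_j$, since both sides of the expression for $B$ then vanish.

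Next I would verify that $\Phi$ descends to $E_1^{-3,5}$, i.e. that it vanishes on exact elements and sends closed cocycles to $W$-invariant cubics. Vanishing on exact elements is a direct calculation using the formula $\phi_{ij}(y) = D_i(\alpha_j^\vee, y) + D_j(\alpha_i^\vee, y) - \alpha_j(\alpha_i^\vee) D_j(\alpha_j^\vee, y)$ for the differential (recoverable from the proof of Prop \ref{prop:3colvanishing}) together with antisymmetry of the $D_i$. For the second statement, the conditions $C(\alpha_i^\vee) = 0$ and $T(\alpha_i^\vee, \alpha_j^\vee, \alpha_k^\vee) = 0$ of Prop \ref{prop:Winvcubic} hold by construction, while the remaining $B$-relations come from specializing the closedness equation $(d\phi)_{lmn} = 0$ to $W^{(3)}$-elements $w_0 s_i s_j s_i$ (as in equations \eqref{ijitypeA}--\eqref{ijitypeG} with the $d$'s set to zero) and to $w_0 s_j s_i s_k$ with $j \perp k$ (as in \eqref{ijk1} with $d = 0$).

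Finally I would show that $\bar{\Phi}$ is bijective, mirroring the case analysis in the $E_1^{-3,6}$ computation. For surjectivity, given a $W$-invariant cubic $C$, I would construct a closed $\phi$ realizing the prescribed values $B(\alpha_i^\vee, \alpha_j^\vee)$ via an iterative, type-by-type algorithm. For injectivity, given a closed $\phi$ with $B \equiv 0$, I would exhibit $D_i \in \wedge^2 X$ with $\phi = dD$ by solving a linear system whose compatibility is ensured by the vanishing of $B$. The main obstacle will be the bookkeeping in this last step: the closedness relations couple many $\phi_{ij}(\alpha_k^\vee)$ and must be resolved one Dynkin type at a time (types $\mA$, $\mB$, $\mC$, $\mD$, $\mE$, $\mF_4$, $\mG_2$) as in the earlier proof.
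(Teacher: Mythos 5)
Your outline matches the paper's proof for the first half: the verification that the formula kills exact elements and sends closed cocycles to $W$-invariant cubic forms (via Proposition \ref{prop:Winvcubic}, using the $s_is_js_i$ relations for the symmetry of $B$ and the permuted closedness relations for the vanishing of the mixed term) is exactly what the paper does. Where you diverge is in the bijectivity half, and here the paper's route is substantially lighter than the type-by-type algorithm you propose. For injectivity the paper does \emph{not} argue by Dynkin type at all: given a closed $\{\phi_{ij}\}$ with $B\equiv 0$, it writes down the linear system $\phi = \d D$ for a general pair and a general triple of simple roots, reduces to the case where two of the three roots are orthogonal (which always holds for three distinct simple roots, since the Dynkin diagram has no triangles), and checks that the two solvability conditions are exactly consequences of the closedness equations \eqref{ijkclosed} together with $B=0$. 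For surjectivity the paper invokes Lemma \ref{lm:Winvcubicclass} --- the group of $W$-invariant cubic forms is free of rank equal to the number of type $\mA$ factors --- so that one only has to exhibit an explicit closed lift in type $\mA$ (namely $\phi_{i,i+1}(\alpha_i^\vee)=-\phi_{i+1,i}(\alpha_{i+1}^\vee)=B(\alpha_i^\vee,\alpha_{i+1}^\vee)$, $\phi_{ij}(\alpha_i^\vee)=-\phi_{ij}(\alpha_j^\vee)$), using $B(\alpha_j^\vee,\alpha_{j+1}^\vee)=B(\alpha_{j+1}^\vee,\alpha_{j+2}^\vee)$ to verify closedness. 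Your plan would presumably recover the content of that lemma along the way (the relations of Proposition \ref{prop:Winvcubic} force $B=0$ outside type $\mA$), but at the cost of redoing the full $\mB/\mC/\mD/\mE/\mF/\mG$ bookkeeping of the $E_1^{-3,6}$ computation, which is precisely the part you flag as "the main obstacle" and leave unexecuted. So: the strategy is sound and would very likely go through, but as written the proposal defers the genuinely hard step, and you should be aware that both of the paper's shortcuts --- the orthogonal-pair reduction for injectivity and the reduction to type $\mA$ for surjectivity --- are available and make the case analysis unnecessary.
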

\begin{proof}
We split the proof of this proposition into four steps.

\proofstep{Step 1.} The map annihilates exact elements of $E_0^{-3, 5}$.

The collection $\{\phi_{ij}\}_{i,j}\in E_0^{-3, 5}$ is exact if there is a collection $\{D_i\}_{i}\in E_0^{-3, 4}$ of elements $D_i\in \wedge^2 X$, such that
\[\phi_{ij}(\alpha_k^\vee) = D_i(\alpha_j^\vee, \alpha_k^\vee) + D_j(\alpha_i^\vee, \alpha_k^\vee) - \alpha_j(\alpha_i^\vee)D_j(\alpha_j^\vee, \alpha_k^\vee).\]

Then
\begin{align*}
\phi_{ij}(\alpha_i^\vee) &= D_i(\alpha_j^\vee, \alpha_i^\vee) - \alpha_j(\alpha_i^\vee)D_j(\alpha_j^\vee, \alpha_i^\vee) \\
\phi_{ji}(\alpha_i^\vee) &= D_i(\alpha_j^\vee, \alpha_i^\vee)\\
\phi_{ij}(\alpha_j^\vee) &= D_j(\alpha_i^\vee, \alpha_j^\vee).
\end{align*}

Therefore,
\begin{align*}
B(\alpha_i^\vee, \alpha_j^\vee) &= \phi_{ij}(\alpha_i^\vee) - \phi_{ji}(\alpha_i^\vee) - \alpha_j(\alpha_i^\vee)\phi_{ij}(\alpha_j^\vee) \\
&= D_i(\alpha_j^\vee, \alpha_i^\vee) - \alpha_j(\alpha_i^\vee) D_j(\alpha_j^\vee, \alpha_i^\vee) - D_i(\alpha_j^\vee, \alpha_i^\vee) - \alpha_j(\alpha_i^\vee)D_j(\alpha_i^\vee, \alpha_j^\vee)\\
&= 0.
\end{align*}

In the last line we have used antisymmetry of $D_j$.

\proofstep{Step 2.} The map sends closed elements of $E_0^{-3, 5}$ to $W$-invariant cubic forms.

The collection $\{\phi_{ij}\}_{i,j}\in E_0^{-3, 5}$ is closed if
\begin{align*}
0 = &\phi_{ij}(\alpha_k^\vee) + \phi_{ik}(\alpha_j^\vee) + \phi_{jk}(\alpha_i^\vee) - \alpha_k(\alpha_j^\vee) \phi_{ik}(\alpha_k^\vee) - \alpha_j(\alpha_i^\vee)\phi_{jk}(\alpha_j^\vee)\\
&+ (\alpha_k(\alpha_j^\vee)\alpha_j(\alpha_i^\vee) - \alpha_k(\alpha_i^\vee))\phi_{jk}(\alpha_k^\vee).
\end{align*}

We have to show that such $\phi_{ij}$ are sent to cubic forms satisfying
\begin{align*}
B(\alpha_j^\vee, \alpha_i^\vee) &= \alpha_i(\alpha_j^\vee) B(\alpha_i^\vee, \alpha_j^\vee) \\
0 &= \alpha_i(\alpha_j^\vee) B(\alpha_i^\vee, \alpha_k^\vee) + \alpha_i(\alpha_k^\vee) B(\alpha_i^\vee, \alpha_j^\vee).
\end{align*}

In other words, we have to show that $\phi_{ij}$ satisfy
\begin{equation}
\begin{aligned}
\phi_{ji}(\alpha_j^\vee) - \phi_{ij}(\alpha_j^\vee) - \alpha_i(\alpha_j^\vee)\phi_{ji}(\alpha_i^\vee) &= \alpha_i(\alpha_j^\vee)(\phi_{ij}(\alpha_i^\vee) - \phi_{ji}(\alpha_i^\vee) - \alpha_j(\alpha_i^\vee)\phi_{ij}(\alpha_j^\vee)) \\
\alpha_i(\alpha_k^\vee)(\phi_{ij}(\alpha_i^\vee) - \phi_{ji}(\alpha_i^\vee) - \alpha_j(\alpha_i^\vee)\phi_{ij}(\alpha_j^\vee)) &= -\alpha_i(\alpha_j^\vee)(\phi_{ik}(\alpha_i^\vee) - \phi_{ki}(\alpha_i^\vee) - \alpha_k(\alpha_i^\vee)\phi_{ik}(\alpha_k^\vee)).
\end{aligned}
\label{phiijWinv}
\end{equation}

The last equation is automatically satisfied if $\alpha_i^\vee$ is orthogonal to either $\alpha_j^\vee$ or $\alpha_k^\vee$. Since among three distinct roots two are necessarily orthogonal, we just need to consider the case when $\alpha_j^\vee$ is orthogonal to $\alpha_k^\vee$. Consider permutations of the indices $(i,j,k)$ in the closedness equation:
\begin{equation}
\begin{aligned}
0 &= \phi_{ij}(\alpha_k^\vee) + \phi_{ik}(\alpha_j^\vee) + \phi_{jk}(\alpha_i^\vee) - \alpha_j(\alpha_i^\vee)\phi_{jk}(\alpha_j^\vee)- \alpha_k(\alpha_i^\vee)\phi_{jk}(\alpha_k^\vee) \\
0 &= \phi_{ki}(\alpha_j^\vee) + \phi_{jk}(\alpha_i^\vee) + \phi_{ij}(\alpha_k^\vee) - \alpha_j(\alpha_i^\vee) \phi_{jk}(\alpha_j^\vee) - \alpha_i(\alpha_k^\vee)\phi_{ij}(\alpha_i^\vee) + \alpha_j(\alpha_i^\vee)\alpha_i(\alpha_k^\vee)\phi_{ij}(\alpha_j^\vee) \\
0 &= \phi_{ji}(\alpha_k^\vee) + \phi_{jk}(\alpha_i^\vee) + \phi_{ik}(\alpha_j^\vee) - \alpha_k(\alpha_i^\vee) \phi_{jk}(\alpha_k^\vee) - \alpha_i(\alpha_j^\vee)\phi_{ik}(\alpha_i^\vee) + \alpha_k(\alpha_i^\vee)\alpha_i(\alpha_j^\vee)\phi_{ik}(\alpha_k^\vee) \\
0 &= \phi_{jk}(\alpha_i^\vee) + \phi_{ji}(\alpha_k^\vee) + \phi_{ki}(\alpha_j^\vee) - \alpha_i(\alpha_k^\vee) \phi_{ji}(\alpha_i^\vee) - \alpha_i(\alpha_j^\vee)\phi_{ki}(\alpha_i^\vee).
\end{aligned}
\label{ijkclosed}
\end{equation}

Taking the sum of the second and third equations and subtracting the first and fourth equations we arrive precisely at the second equation in \eqref{phiijWinv}.

The closedness conditions for $k = i$ are
\begin{align*}
0 &= \phi_{ij}(\alpha_i^\vee) - \alpha_j(\alpha_i^\vee)\phi_{ji}(\alpha_j^\vee) + (\alpha_i(\alpha_j^\vee)\alpha_j(\alpha_i^\vee) - 1)\phi_{ji}(\alpha_i^\vee) \\
0 &= \phi_{ji}(\alpha_j^\vee) - \alpha_i(\alpha_j^\vee)\phi_{ij}(\alpha_i^\vee) + (\alpha_i(\alpha_j^\vee)\alpha_j(\alpha_i^\vee) - 1)\phi_{ij}(\alpha_j^\vee).
\end{align*}

The first equation coincides with the first equation in \eqref{phiijWinv}.

\proofstep{Step 3.} The map from $E_1^{-3, 5}$ to $W$-invariant cubic forms is injective.

Suppose that $B(\alpha_i^\vee, \alpha_j^\vee) = 0$, i.e.
\[\phi_{ij}(\alpha_i^\vee) = \phi_{ji}(\alpha_i^\vee) + \alpha_j(\alpha_i^\vee)\phi_{ij}(\alpha_j^\vee)\]
for every pair of simple coroots $\alpha_i^\vee, \alpha_j^\vee$. Then we need to prove that the corresponding elements $\phi_{ij}$ are exact, i.e. there are $D_i\in\wedge^2 X$, such that
\[\phi_{ij}(\alpha_k^\vee) = D_i(\alpha_j^\vee, \alpha_k^\vee) + D_j(\alpha_i^\vee, \alpha_k^\vee) - \alpha_j(\alpha_i^\vee)D_j(\alpha_j^\vee, \alpha_k^\vee).\]

First, consider the case of two distinct coroots $i,j$. Then we get the equations
\begin{align*}
\phi_{ij}(\alpha_j^\vee) &= D_j(\alpha_i^\vee, \alpha_j^\vee) \\
\phi_{ij}(\alpha_i^\vee) &= D_i(\alpha_j^\vee, \alpha_i^\vee) + \alpha_j(\alpha_i^\vee) D_j(\alpha_i^\vee, \alpha_j^\vee)\\
\phi_{ji}(\alpha_i^\vee) &= D_i(\alpha_j^\vee, \alpha_i^\vee) \\
\phi_{ji}(\alpha_j^\vee) &= D_j(\alpha_i^\vee, \alpha_j^\vee) + \alpha_i(\alpha_j^\vee) D_i(\alpha_j^\vee, \alpha_i^\vee).
\end{align*}

From the first and third equations we determine $D_j(\alpha_i^\vee, \alpha_j^\vee)$ and $D_i(\alpha_j^\vee, \alpha_i^\vee)$. The second and fourth equations will be satisfied if
\[\phi_{ij}(\alpha_i^\vee) = \phi_{ji}(\alpha_i^\vee) + \alpha_j(\alpha_i^\vee)\phi_{ij}(\alpha_j^\vee)\]
and similarly after interchanging $i$ and $j$. But this equation is exactly $B(\alpha_i^\vee, \alpha_j^\vee) = 0$.

Now consider the case of three distinct coroots $(i, j, k)$, where we again assume that $\alpha_j^\vee$ and $\alpha_k^\vee$ are orthogonal. The equations we get are
\begin{align*}
\phi_{ij}(\alpha_k^\vee) &= D_i(\alpha_j^\vee, \alpha_k^\vee) + D_j(\alpha_i^\vee, \alpha_k^\vee) + \alpha_j(\alpha_i^\vee)D_j(\alpha_k^\vee, \alpha_j^\vee) \\
\phi_{ji}(\alpha_k^\vee) &= D_i(\alpha_j^\vee, \alpha_k^\vee) + D_j(\alpha_i^\vee, \alpha_k^\vee) + \alpha_i(\alpha_j^\vee)D_i(\alpha_k^\vee, \alpha_i^\vee) \\
\phi_{jk}(\alpha_i^\vee) &= -D_k(\alpha_i^\vee, \alpha_j^\vee) - D_j(\alpha_i^\vee, \alpha_k^\vee) \\
\phi_{ik}(\alpha_j^\vee) &= -D_i(\alpha_j^\vee, \alpha_k^\vee) + D_k(\alpha_i^\vee, \alpha_ j^\vee) + \alpha_k(\alpha_i^\vee)D_k(\alpha_j^\vee, \alpha_k^\vee) \\
\phi_{ki}(\alpha_j^\vee) &= -D_i(\alpha_j^\vee, \alpha_k^\vee) + D_k(\alpha_i^\vee, \alpha_j^\vee) + \alpha_i(\alpha_k^\vee)D_i(\alpha_j^\vee, \alpha_i^\vee).
\end{align*}

Let us substitute $D_j(\alpha_k^\vee, \alpha_j^\vee)$ and its permutations in terms of $\phi_{kj}(\alpha_j^\vee)$ and its permutations:
\begin{align*}
\phi_{ij}(\alpha_k^\vee) &= D_i(\alpha_j^\vee, \alpha_k^\vee) + D_j(\alpha_i^\vee, \alpha_k^\vee) + \alpha_j(\alpha_i^\vee)\phi_{kj}(\alpha_j^\vee) \\
\phi_{ji}(\alpha_k^\vee) &= D_i(\alpha_j^\vee, \alpha_k^\vee) + D_j(\alpha_i^\vee, \alpha_k^\vee) + \alpha_i(\alpha_j^\vee)\phi_{ki}(\alpha_i^\vee) \\
\phi_{jk}(\alpha_i^\vee) &= -D_k(\alpha_i^\vee, \alpha_j^\vee) - D_j(\alpha_i^\vee, \alpha_k^\vee) \\
\phi_{ik}(\alpha_j^\vee) &= -D_i(\alpha_j^\vee, \alpha_k^\vee) + D_k(\alpha_i^\vee, \alpha_ j^\vee) + \alpha_k(\alpha_i^\vee)\phi_{jk}(\alpha_k^\vee) \\
\phi_{ki}(\alpha_j^\vee) &= -D_i(\alpha_j^\vee, \alpha_k^\vee) + D_k(\alpha_i^\vee, \alpha_j^\vee) + \alpha_i(\alpha_k^\vee)\phi_{ji}(\alpha_i^\vee).
\end{align*}

The solvability equation for the first two equations is
\[\phi_{ij}(\alpha_k^\vee) - \alpha_j(\alpha_i^\vee)\phi_{kj}(\alpha_j^\vee) = \phi_{ji}(\alpha_k^\vee) - \alpha_i(\alpha_j^\vee)\phi_{ki}(\alpha_i^\vee).\]

Let's show that it follows from the closedness equations \eqref{ijkclosed}. Indeed, equating the second and fourth equations, we get
\begin{align*}
&\phi_{ij}(\alpha_k^\vee) - \alpha_j(\alpha_i^\vee)\phi_{jk}(\alpha_j^\vee) - \alpha_i(\alpha_k^\vee)\phi_{ij}(\alpha_i^\vee) + \alpha_j(\alpha_i^\vee)\alpha_i(\alpha_k^\vee)\phi_{ij}(\alpha_j^\vee) \\
&= \phi_{ji}(\alpha_k^\vee) - \alpha_i(\alpha_k^\vee)\phi_{ji}(\alpha_i^\vee) - \alpha_i(\alpha_j^\vee)\phi_{ki}(\alpha_i^\vee).
\end{align*}

The terms
\[-\alpha_i(\alpha_k^\vee)\phi_{ij}(\alpha_i^\vee) + \alpha_j(\alpha_i^\vee)\alpha_i(\alpha_k^\vee)\phi_{ij}(\alpha_j^\vee) + \alpha_i(\alpha_k^\vee)\phi_{ji}(\alpha_i^\vee)\]
add up to $-\alpha_i(\alpha_k^\vee) B(\alpha_i^\vee, \alpha_j^\vee) = 0$. The rest coincides with the solvability equation.

Similarly, the solvability equation for the last two equations is satisfied if we repeat the argument with $j$ replaced by $k$.

Therefore, we just need to be able to solve a smaller system
\begin{align*}
\phi_{ij}(\alpha_k^\vee) &= D_i(\alpha_j^\vee, \alpha_k^\vee) + D_j(\alpha_i^\vee, \alpha_k^\vee) + \alpha_j(\alpha_i^\vee)\phi_{kj}(\alpha_j^\vee) \\
\phi_{jk}(\alpha_i^\vee) &= -D_k(\alpha_i^\vee, \alpha_j^\vee) - D_j(\alpha_i^\vee, \alpha_k^\vee) \\
\phi_{ik}(\alpha_j^\vee) &= -D_i(\alpha_j^\vee, \alpha_k^\vee) + D_k(\alpha_i^\vee, \alpha_ j^\vee) + \alpha_k(\alpha_i^\vee)\phi_{jk}(\alpha_k^\vee).
\end{align*}

This system of equations on $D_i(\alpha_j^\vee, \alpha_k^\vee)$ and its permutations has a (non-unique) solution if the following solvability equation is satisfied:
\[\phi_{ij}(\alpha_k^\vee) + \phi_{jk}(\alpha_i^\vee) + \phi_{ik}(\alpha_j^\vee) = \alpha_j(\alpha_i^\vee)\phi_{kj}(\alpha_j^\vee) + \alpha_k(\alpha_i^\vee)\phi_{jk}(\alpha_k^\vee).\]

This solvability equation is equivalent to the first equation in \eqref{ijkclosed}. Therefore, we can find the antisymmetric forms $D_i$ and hence $\{\phi_{ij}\}_{i,j}$ is exact.

\proofstep{Step 4.} The map from $E_1^{-3, 5}$ to $W$-invariant cubic forms is surjective.

Without loss of generality, we will assume that $G$ is of type $A$ (see Lemma \ref{lm:Winvcubicclass}). Linearly order the roots and let
\[\phi_{i,i+1}(\alpha_i^\vee) = -\phi_{i+1,i}(\alpha_{i+1}^\vee) = B(\alpha_i^\vee, \alpha_{i+1}^\vee),\quad \phi_{ij}(i) = -\phi_{ij}(j).\]

This is clearly a lift of the cubic form given by $B$ and we just have to show that thus defined $\{\phi_{ij}\}_{i,j}$ is closed. Indeed, we need to check that
\[\phi_{ij}(\alpha_i^\vee) + \phi_{ji}(\alpha_j^\vee) = 0,\]
which is clearly satisfied by assumption, and the closedness equations \eqref{ijkclosed}.

The closedness equations are nontrivial only for a triple of consecutive roots $j$, $i=j+1$ and $k=j+2$:
\begin{align*}
0 &= 0 \\
0 &= \phi_{ij}(\alpha_i^\vee) + \phi_{ij}(\alpha_j^\vee) \\
0 &= \phi_{ik}(\alpha_i^\vee) + \phi_{ik}(\alpha_k^\vee) \\
0 &= \phi_{ji}(\alpha_i^\vee) + \phi_{ki}(\alpha_i^\vee).
\end{align*}

The first three equations are trivially satisfied from our assumptions and the last one follows from the fact that cubic invariants have $B(\alpha_j^\vee, \alpha_{j+1}^\vee) = B(\alpha_{j+1}^\vee, \alpha_{j+2}^\vee)$. \qedhere
\end{proof}

The $E_1$ page of the spectral sequence is thus
\[
\xymatrix@R-2pc@C-2pc{
E_1^{-3,6} & 0 & 0 & 0 \\
E_1^{-3, 5} & 0 & 0 & 0 \\
0 & 0 & 0 & 0\\
0 & E_1^{-2, 3} & 0 & 0 \\
0 & 0 & 0 & 0 \\
0 & 0 & 0 & 0 \\
0 & 0 & 0 & \K_3(k)
}\]

Therefore, the spectral sequence again degenerates at $E_1$ and we derive the desired cohomology groups.

\section{$\cK$-cohomology of reductive groups}
\label{sect:kcohomologyreductive}

In this section $G$ will denote a split connected reductive group over a field $k$ with a simply-connected derived group $G_{der}$. Thus, we have an exact sequence
\[1\rightarrow G_{der}\rightarrow G\rightarrow H_0\rightarrow 1\]
which induces an exact sequence of character lattices
\[0\rightarrow X_0\rightarrow X\rightarrow X_{der}\rightarrow 0\]
and cocharacter lattices
\[0\rightarrow Y_{der}\rightarrow Y\rightarrow Y_0\rightarrow 0.\]

Our goal will be to compute the $\cK_3$-cohomology of $G$.

\subsection{$\cK_2$-cohomology}

As a warm-up, let's compute the $\cK_2$-cohomology of $G$. Recall the $E_0$ page of the Brylinski--Deligne spectral sequence computing $\mH^\bullet(G, \cK_2)$:
\[
\xymatrix@R-1pc@C-2pc{
\oplus_{W^{(2)}} \Z & 0 & 0 \\
\oplus_{W^{(1)}} X \ar[u] & 0 & 0 \\
\bigwedge^2 X \ar[u] & \oplus_{W^{(1)}} k^\times & 0 \\
0 & X\otimes k^\times \ar[u] & 0 \\
0 & 0 & \K_2(k)
}
\]

By simply-connectivity of $G_{der}$ we can identify $\oplus_{W^{(1)}}\Z\cong X_{der}$. Therefore, the cohomology in the $(-1)$-st column is
\[E_1^{-1, 1} = X_0\otimes k^\times,\quad E_1^{-1, 2} = 0.\]

The differential
\[\d\colon \bigwedge^2 X\rightarrow X_{der}\otimes X\]
is no longer injective, but its kernel is $E_1^{-2, 2} = \wedge^2 X_0$.

The differential
\[\d\colon \oplus_{W^{(1)}} X\rightarrow \oplus_{W^{(2)}}\Z\]
is still surjective, so $E_1^{-2, 4} = 0$.

We compute the cohomology $E_1^{-2, 3}$ in the following way. First, it coincides with the cohomology of
\[\bigwedge^2 X / \bigwedge^2 X_0\rightarrow X_{der}\otimes X\rightarrow \oplus_{W^{(2)}}\Z.\]

In other words, we have to compute cohomology of the middle column in
\[
\xymatrix{
0 \ar[r] & X_{der}\otimes X_0 \ar[r] \ar^{1}[d] & \wedge^2 X / \wedge^2 X_0 \ar[r] \ar[d] & \wedge^2 X_{der} \ar[r] \ar[d] & 0 \\
0 \ar[r] & X_{der}\otimes X_0 \ar[r] \ar[d] & X_{der}\otimes X \ar[r] \ar[d] & X_{der}\otimes X_{der} \ar[r] \ar[dl] & 0 \\
& 0 & \oplus_{W^{(2)}}\Z & &
}
\]

Since the vertical map in the left-most column is an isomorphism, the cohomology in the middle column coincides with the cohomology in the right-most column, which we have already computed. Thus, $E_1^{-2, 3}$ coincides with the group of $W$-invariant quadratic forms on $Y_{der}$.

The $E_1$ page of the spectral sequence is
\[
\xymatrix@R-2pc@C-2pc{
E_1^{-2, 3} & 0 & 0 \\
\bigwedge^2 X_0 & 0 & 0 \\
0 & X_0\otimes k^\times & 0 \\
0 & 0 & \K_2(k)
}
\]

The spectral sequence degenerates at $E_1$. Denote by $E_n(H_0)$ the same spectral sequence for the group $H_0$. As we already know, the spectral sequence is concentrated on the diagonal $p+q = 0$ and, moreover, the pullback map $\mH^0(H_0, \cK_2)\rightarrow \mH^0(G, \cK_2)$ induces isomorphisms on the diagonal terms starting at the $E_1$ page.

\begin{thm}[Brylinski--Deligne]
Let $G$ be a split connected reductive group with a simply-connected derived group. Then the $\cK_2$-cohomology groups are as follows:
\begin{itemize}
\item $\mH^0(G, \cK_2)\cong \mH^0(H_0, \cK_2)$ is isomorphic to the group of alternating forms $A$ on the cocharacter lattice $Y_0$ together with a quadratic refinement $q$ and an element of $\K_2(k)$,

\item $\mH^1(G, \cK_2)$ is isomorphic to the group of $W$-invariant quadratic forms on $Y_{der}$.
\end{itemize}

The higher cohomology groups vanish.
\end{thm}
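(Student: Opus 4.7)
The plan is to feed the $E_1$-page computation already assembled above into a comparison with the Brylinski--Deligne spectral sequence for the torus $H_0$. First I would verify degeneration at $E_1$. The only nonzero entries on that page sit at the four positions $(0,0)$, $(-1,1)$, $(-2,2)$ and $(-2,3)$, and for every $r\geq 2$ the differential $d_r\colon E_r^{p,q}\to E_r^{p+r,q-r+1}$ starting at any of these four positions lands on a zero entry (or outside the second quadrant). This forces $E_1=E_\infty$, so $\mH^0(G,\cK_2)$ carries a three-step filtration with successive subquotients $\K_2(k)$, $X_0\otimes k^\times$, $\bigwedge^2 X_0$, the group $\mH^1(G,\cK_2)$ is exactly $E_1^{-2,3}$ and has already been identified with the $W$-invariant quadratic forms on $Y_{der}$, and $\mH^n(G,\cK_2)=0$ for $n\geq 2$.

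To upgrade the filtration on $\mH^0(G,\cK_2)$ into the asserted description, I would compare with the same spectral sequence applied to $H_0$. For a torus the Brylinski--Deligne sequence lives entirely on the diagonal $p+q=0$ with $E_1$-entries $\K_2(k)$, $X_0\otimes k^\times$ and $\bigwedge^2 X_0$, matching the three subquotients above. The quotient $G\twoheadrightarrow H_0$ induces a morphism of spectral sequences which, on these three diagonal $E_1$-terms, is the identity by functoriality of the Cousin complex and of the multiplicative filtration $V$ from Section~\ref{sect:kcohomologytori}. Since both sequences degenerate at $E_1$, the pullback $\mH^0(H_0,\cK_2)\to\mH^0(G,\cK_2)$ is an isomorphism, and Proposition~\ref{prop:k2extension} then recasts $\mH^0(H_0,\cK_2)$ as the group of alternating forms on $Y_0$ with a quadratic refinement valued in $k^\times$, together with a summand $\K_2(k)$.

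The only non-formal step is the verification that the comparison map acts as the identity on the three diagonal $E_1$-terms; this reduces to compatibility of the Cousin resolution of $G$ with restriction along $H\hookrightarrow G$ together with the fact that the torus entries arising from the $V$-filtration descend compatibly under the quotient $H\to H_0$, both of which are essentially formal. Everything else is bookkeeping on top of the work already carried out in Sections~\ref{sect:kcohomologytori} and~\ref{sect:kcohomologysemisimple}.
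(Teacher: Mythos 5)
Your proposal is correct and follows essentially the same route as the paper: compute the $E_1$ page, observe degeneration for positional reasons, and identify $\mH^0(G,\cK_2)$ with $\mH^0(H_0,\cK_2)$ via the morphism of spectral sequences induced by $G\twoheadrightarrow H_0$, concluding with Proposition \ref{prop:k2extension}. The only cosmetic quibble is that the comparison on the diagonal terms is induced by the composite $H\hookrightarrow G\twoheadrightarrow H_0$ (i.e.\ by $X_0\hookrightarrow X$ on the $V$-graded pieces at $w_0$), not by restriction along $H\hookrightarrow G$ alone.
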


\subsection{$\cK_3$-cohomology}

Recall that the $E_0$ page of the spectral sequence computing $\mH^\bullet(G, \cK_3)$ is
\[
\xymatrix@R-1pc@C-2pc{
\oplus_{W^{(3)}} \Z & 0 & 0 & 0 \\
\oplus_{W^{(2)}} X \ar[u] & 0 & 0 & 0 \\
\oplus_{W^{(1)}} \bigwedge^2 X \ar[u] & \oplus_{W^{(2)}} k^\times & 0 & 0\\
\bigwedge^3 X \ar[u] & \oplus_{W^{(1)}} X\otimes k^\times \ar[u] & 0 & 0 \\
0 & \bigwedge^2 X\otimes k^\times \ar[u] & \oplus_{W^{(1)}} \K_2(k) & 0 \\
0 & 0 & X\otimes \K_2(k) \ar[u] & 0 \\
0 & 0 & 0 & \K_3(k)
}
\]

As for the case of $\cK_2$-cohomology, we get $E_1^{-1, 1} = X_0\otimes \K_2(k)$ and $E_1^{-1, 2} = 0$.

The cohomology in the $(-2)$-nd column is \[E_1^{-2, 2} = \wedge^2 X_0\otimes k^\times, \quad E_1^{-2, 4} = 0\] and $E_1^{-2, 3}$ is the group of $W$-invariant quadratic forms on $Y_{der}$ with values in $k^\times$.

The cohomology group $E_1^{-3, 5}$ is the cohomology of the middle column in
\[
\xymatrix{
0 \ar[r] & \oplus_{W^{(1)}} X_{der}\otimes X_0 \ar[r] \ar[d] & \oplus_{W^{(1)}} \wedge^2 X / \wedge^2 X_0 \ar[r] \ar[d] & \oplus_{W^{(1)}} \wedge^2 X_{der} \ar[r] \ar[d] & 0\\
0 \ar[r] & \oplus_{W^{(2)}} X_0 \ar[r] \ar[d] & \oplus_{W^{(2)}} X \ar[r] \ar[d] & \oplus_{W^{(2)}} X_{der} \ar[r] \ar[dl] & 0 \\
& 0 & \oplus_{W^{(3)}} \Z & &
}
\]

The map
\[\bigoplus_{W^{(1)}} X_{der}\otimes X_0 \rightarrow \bigoplus_{W^{(2)}} X_0\]
is surjective (see Proposition \ref{prop:2rowsurjective}). Therefore, the cohomology of the middle column can be computed as the cohomology of the right-most column, which we have shown in the previous section to be isomorphic to the group of $W$-invariant cubic forms on $Y_{der}$.

The cohomology group $E_1^{-3, 4}$ is the cohomology of the middle column in
\[
\xymatrix{
0 \ar[r] & K \ar[r] \ar[d] & \wedge^3 X / \wedge^3 X_0 \ar[r] \ar[d] & \wedge^3 X_{der} \ar[r] \ar[d] & 0 \\
0 \ar[r] & \oplus_{W^{(1)}} X_{der}\otimes X_0 \ar[r] \ar[d] & \oplus_{W^{(1)}} \wedge^2 X / \wedge^2 X_0 \ar[r] \ar[d] & \oplus_{W^{(1)}} \wedge^2 X_{der} \ar[r] \ar[d] & 0\\
0 \ar[r] & \oplus_{W^{(2)}} X_0 \ar[r] & \oplus_{W^{(2)}} X \ar[r] & \oplus_{W^{(2)}} X_{der} \ar[r] & 0
}
\]

The kernel $K$ itself can be written as an extension
\[
0 \rightarrow X_{der}\otimes \wedge^2 X_0 \rightarrow K \rightarrow \wedge^2 X_{der}\otimes X_0\rightarrow 0.
\]

Note that the map $K\rightarrow \oplus_{W^{(1)}} X_{der}\otimes X_0$ factors through $K\rightarrow\wedge^2 X_{der}\otimes X_0$.

The cohomology of the right-most column is zero (Proposition \ref{prop:3colvanishing}). Therefore, $E_1^{-3, 4}$ is the cohomology of
\[\wedge^2 X_{der}\otimes X_0\rightarrow \oplus_{W^{(1)}} X_{der}\otimes X_0 \rightarrow \oplus_{W^{(2)}}X_0.\]

This complex is $X_0$ tensored with the $(-2)$-nd column of the $E_0$ page of the spectral sequence computing $\mH^\bullet(G, \cK_2)$.

Therefore, $E_1^{-3, 4}$ is isomorphic to the group of forms $Q(-, -)\colon Y_{der}\otimes Y_0$ quadratic and $W$-invariant in the first argument and linear in the second one.

The $E_1$ page of the spectral sequence is
\[
\xymatrix@R-2pc@C-2pc{
E_1^{-3,6} & 0 & 0 & 0 \\
E_1^{-3, 5} & 0 & 0 & 0 \\
E_1^{-3, 4} & 0 & 0 & 0\\
\wedge^3 X_0 & E_1^{-2, 3} & 0 & 0 \\
0 & \wedge^2 X_0\otimes k^\times & 0 & 0 \\
0 & 0 & X_0\otimes \K_2(k) & 0 \\
0 & 0 & 0 & \K_3(k)
}
\]

The only possible differential occurs at the $E_1$ page $\wedge^3 X_0\rightarrow E_1^{-2, 3}$. To see that it vanishes, observe that the map $\mH^0(H_0, \cK_3)\rightarrow \mH^0(G, \cK_3)$ induces an isomorphism on the diagonal terms of the $E_1$ pages and the differential vanishes for the $H_0$ spectral sequence.

We have a possible nontrivial extension
\[1\rightarrow E_1^{-2, 3}\rightarrow \mH^1(G, \cK_3)\rightarrow E_1^{-3, 4}\rightarrow 1\]
coming from the filtration on $\mH^0(H, \cK_2)$.

Recall (Proposition \ref{prop:k2extension}) that $\tilde{\mH}^0(H, \cK_2)$ is the group of alternating forms $A$ on $Y$ together with a quadratic refinement satisfying
\[q(x+y) = q(x)q(y) (-1)^{A(x, y)}.\]

The group $E_1^{-2, 3}$ has the forms $q$ restricted to $Y_{der}$. In contrast, the alternating forms $A$ appearing in $E_1^{-3, 4}$ vanish on $Y_{der}$. This shows the splitting of the exact sequence and hence we get the following result.

\begin{thm}
The $\cK_3$-cohomology of $G$ is as follows:
\begin{itemize}
\item $\mH^0(G, \cK_3)\cong \mH^0(H_0, \cK_3)$ is isomorphic to the direct sum of $\K_3(k)$ and the group of cubic forms on $Y_0$ together with refinements $q_1$, $q_2$ obeying the conditions of Proposition \ref{prop:k3extension},

\item $\mH^1(G, \cK_3)$ consists of quadratic forms on $Y_{der}$ valued in $k^\times$ and forms $Q(-, -)$ on $Y_{der}$ and $Y_0$ quadratic and $W$-invariant in the first argument and linear in the second,

\item $\mH^2(G, \cK_3)$ is isomorphic to the group of $W$-invariant cubic forms on $Y_{der}$,

\item $\mH^3(G, \cK_3)$ is isomorphic to $(\Z/2\Z)^{n_{\BDG}}$.
\end{itemize}

The higher cohomology groups vanish.
\end{thm}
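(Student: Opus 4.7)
The plan is to read the $\cK_3$-cohomology off the $E_1$ page of the Brylinski--Deligne spectral sequence, whose entries have already been identified column-by-column in the preceding discussion. The mechanism is uniform: the short exact sequence $0 \to X_0 \to X \to X_{der} \to 0$ induces a three-step filtration on each column of $E_0$, whose graded pieces are controlled either by the semisimple results of Section \ref{sect:kcohomologysemisimple} applied to $G_{der}$ or by torus contributions involving only $X_0$ and $\K_\bullet(k)$. Compiling by diagonals $n = p + q$, one obtains $\K_3(k)$, $X_0 \otimes \K_2(k)$, $\wedge^2 X_0 \otimes k^\times$, $\wedge^3 X_0$ on $n = 0$; $E_1^{-2, 3}$ and $E_1^{-3, 4}$ on $n = 1$; $E_1^{-3, 5}$ on $n = 2$; and $E_1^{-3, 6}$ on $n = 3$.

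Next I would verify degeneration at $E_1$. All higher differentials but one land in or originate from zero entries; the one exception is the $d_1$ map $\wedge^3 X_0 \to E_1^{-2, 3}$ from the $n = 0$ diagonal into the $n = 1$ diagonal. To kill it I would invoke naturality of the spectral sequence under the projection $G \twoheadrightarrow H_0$: on the $n = 0$ diagonal the pullback from the $H_0$ spectral sequence is an isomorphism (both terms involve only $X_0$), while the target $E_1^{-2, 3}$ vanishes for the torus, so the resulting commutative square forces $d_1$ for $G$ to vanish.

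The main remaining work is the two extension problems. On the $n = 0$ diagonal, the match between $G$ and $H_0$ at $E_1$ upgrades via naturality to an isomorphism $\mH^0(G, \cK_3) \cong \mH^0(H_0, \cK_3)$, whose explicit description is given by Proposition \ref{prop:k3extension} together with a $\K_3(k)$ summand. On the $n = 1$ diagonal we have
\[1 \rightarrow E_1^{-2, 3} \rightarrow \mH^1(G, \cK_3) \rightarrow E_1^{-3, 4} \rightarrow 1,\]
which I would split by observing that $E_1^{-3, 4}$ sits inside $\Hom(Y_{der}^{\otimes 2} \otimes Y_0, \Z)$ and is therefore free abelian, so $\mathrm{Ext}^1(E_1^{-3, 4}, E_1^{-2, 3})$ vanishes. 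The only conceptual obstacle I expect is checking that the splitting is canonical in the sense of Proposition \ref{prop:k2extension}: the quadratic refinements parameterizing $E_1^{-2, 3}$ and the bilinear data parameterizing $E_1^{-3, 4}$ live over complementary parts of the alternating-form data on $Y$, so the two summands are disjointly supported, matching the direct-sum description in the theorem. The diagonals $n = 2, 3$ each carry only a single $E_1$ entry and yield the stated answer directly.
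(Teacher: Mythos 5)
Your proposal follows the paper's proof essentially step for step: assemble the $E_1$ page from the column-by-column computations, observe that the single possible differential $\wedge^3 X_0\rightarrow E_1^{-2,3}$ dies by naturality of the spectral sequence along $G\twoheadrightarrow H_0$, and resolve the extension problems on the diagonals $n=0,1$. The only variation is in splitting the $n=1$ extension, where you invoke freeness of $E_1^{-3,4}$ (so $\mathrm{Ext}^1(E_1^{-3,4}, E_1^{-2,3})=0$) while the paper notes that the quadratic refinements live on $Y_{der}$ whereas the relevant alternating forms vanish there --- both are valid, and you mention the latter argument yourself.
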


\section{Central extensions of groups}

\label{sect:centralextensions}

\subsection{Central extensions}

Let $\B_\bullet G$ be the simplicial scheme whose space of $n$-simplices is $G^{\times n}$.

Consider a sheaf $\cF$ of abelian groups, such that $\mH^2(\Spec k, \cF) = 0$. In other words, every $\cF$-gerbe on the point is trivializable. Then $\mH^2(\B_\bullet G, \cF)$ is identified with the group of central extensions of $G$ by $\cF$.

Indeed, an element of $\mH^2(\B_\bullet G, \cF)$ is an $\cF$-gerbe on the point together with an $\cF$-torsor $\cT$ on $G$ and an isomorphism $\mathrm{mult}\colon m^*\cT\rightarrow p_1^*\cT\boxtimes p_2^*\cT$ on $G\times G$ satisfying an associativity condition on $G\times G\times G$. In other words, this is the data of a \textit{multiplicative torsor} on $G$.

Similarly, elements of $\mH^3(\B_\bullet G, \cF)$ are identified with isomorphism classes of \textit{multiplicative gerbes} on $G$. Recall that a multiplicative gerbe is an $\cF$-gerbe $\cG$ on $G$ together with an isomorphism of gerbes $\mathrm{mult}\colon m^*\cG\rightarrow p_1^*\cG\boxtimes p_2^*\cG$ on $G\times G$ and a natural transformation between two isomorphisms on $G\times G\times G$ obeying an associativity-like condition on $G^{\times 4}$.

The notion of a multiplicative gerbe is equivalent to that of a gerbal central extension of $G$ by $\B\cF$, but we will be content with the notion of a multiplicative gerbe.

\subsection{Cohomology of the classifying space}

Cohomology groups $\mH^\bullet(\B_\bullet G, \cF)$ can be computed via the spectral sequence of a double complex. Its $E_1$ page is
\[E_1^{p, q} = \mH^q(G^{\times p}, \cF)\]
with the $E_1$ differential $\mH^q(G^{\times p}, \cF)\rightarrow \mH^q(G^{\times(p+1)}, \cF)$ given by the alternating sum of the pullbacks along the face maps $G^{\times(p+1)}\rightarrow G^{\times p}$.

We will also need the following computation. Let $S^1_\bullet$ be the standard simplicial circle whose set of $n$-simplices consists of $n+1$ elements. Taking the alternating coface complex of the cosimplicial abelian group of functions $\Z[S^1_\bullet]$ on the circle, we get the complex
\[\mC^\bullet(S^1, \Z):=(\Z\rightarrow \Z^{\oplus 2}\rightarrow \Z^{\oplus 3} \rightarrow ...)\]
computing the cohomology $\mH^\bullet(S^1, \Z)$.

There is a subcomplex
\[\mC^\bullet(\pt, \Z):=(\Z\stackrel{0}\rightarrow \Z\stackrel{1}\rightarrow \Z\rightarrow ...)\]
of $\mC^\bullet(S^1, \Z)$ coming from the cohomology of the point and we denote by
\[\tilde{\mC}^\bullet(S^1, \Z) = (0\rightarrow \Z\rightarrow \Z^2\rightarrow ...)\]
the reduced complex. It computes the reduced cohomology $\tilde{\mH}^\bullet(S^1, \Z)$, which is $\Z$ concentrated in degree 1.

For any sheaf $\cF$ of abelian groups we have a map
\[\mH^m(G, \cF)^{\oplus n}\rightarrow \mH^m(G^{\times n}, \cF)\]
given by the sum of the pullbacks along projections. If we assume that these maps are isomorphisms for each $n$, the complex $\mH^m(\B_\bullet G, \cF)$ becomes isomorphic to $\mH^m(G, \cF)\otimes \tilde{\mC}^\bullet(S^1, \Z)$, which has cohomology $\mH^m(G, \cF)$ concentrated in degree 1.

\subsection{Central extensions of groups by $\cK_2$}

Let us compute the cohomology $\mH^\bullet(\B_\bullet G, \cK_2)$. The $E_1$ page of the spectral sequence is
\[
\xymatrix@R-1pc@C-1pc{
\mH^1(\pt, \cK_2) \ar[r] & \mH^1(G, \cK_2) \ar[r] & \mH^1(G\times G, \cK_2) \ar[r] & ...\\
\mH^0(\pt, \cK_2) \ar[r] & \mH^0(G, \cK_2) \ar[r] & \mH^0(G\times G, \cK_2) \ar[r] & ...
}
\]

Since there are no linear $W$-invariant forms on $Y_{der}$, the group of $W\times W$-invariant quadratic forms on $Y_{der}\oplus Y_{der}$ is the sum of the groups of $W$-invariant quadratic forms on each summand. This implies that the map
\[\mH^1(G, \cK_2)^{\oplus n}\rightarrow \mH^1(G^{\times n}, \cK_2)\]
introduced earlier is an isomorphism. Thus the first row of the spectral sequence forms the complex $\tilde{\mC}^\bullet(S^1, \Z)\otimes \mH^1(G, \cK_2)$, which has cohomology $\mH^1(G, \cK_2)$ in degree 1.

The zeroth row of the spectral sequence is isomorphic to the complex $\mH^0(\B_\bullet H_0, \cK_2)$. The cohomology of $\mH^0(\B_\bullet H_0, \cK_n)$ has been computed Esnault et al. (see \cite[Theorem 4.6]{EKLV}). Its cohomology in degree $m$ is
\[\mH^m \mH^0(\B_\bullet H_0, \cK_n) \cong \Sym^m(X_0)\otimes \K_{n-m}(k).\]

Therefore, the $E_2$ page of the spectral sequence is
\[
\xymatrix@R-2pc@C-2pc{
0 & \mH^1(G, \cK_2) & 0 & 0 \\
\K_2(k) & X_0\otimes k^\times & \Sym^2(X_0) & 0
}
\]

The spectral sequence degenerates at $E_2$. The following statement can be found in \cite[Theorem 4.7]{BrD}.

\begin{thm}[Brylinski--Deligne]
The cohomology $\mH^\bullet(\B_\bullet G, \cK_2)$ is given by
\begin{itemize}
\item $\mH^0(\B_\bullet G, \cK_2)\cong \K_2(k)$,

\item $\mH^1(\B_\bullet G, \cK_2)$ is isomorphic to the group of $W\!\text{-invariant}$ linear forms on the cocharacter lattice,

\item $\mH^2(\B_\bullet G, \cK_2)$ is isomorphic to the group of $W\!\text{-invariant}$ quadratic forms on the cocharacter lattice.
\end{itemize}

The cohomology vanishes in other degrees.
\end{thm}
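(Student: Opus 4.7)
The plan is to read off $\mH^\bullet(\B_\bullet G, \cK_2)$ directly from the $E_2$ page displayed above. First I would check degeneration at $E_2$: the only nonzero entries sit on the row $q = 0$ at positions $p = 0, 1, 2$ and at the single spot $(1,1)$. For any $r \geq 2$ the differential $d_r$ shifts bidegree by $(r, 1-r)$, so it either originates from or lands in a zero entry. Hence $E_\infty = E_2$, and in particular $\mH^n(\B_\bullet G, \cK_2) = 0$ for $n \geq 3$ since no $E_\infty^{p,q}$ with $p + q \geq 3$ survives.

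The groups in degrees $0$ and $1$ then read off with no further work: $\mH^0(\B_\bullet G, \cK_2) = E_\infty^{0,0} = \K_2(k)$, and for $\mH^1$ the only contribution is $E_\infty^{1,0} = X_0 \otimes k^\times$, which is naturally the group of characters of $H_0$, equivalently of $W$-invariant linear forms on the cocharacter lattice with values in $k^\times$.

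For $\mH^2$ the spectral sequence furnishes an extension
\[
0 \longrightarrow \Sym^2(X_0) \longrightarrow \mH^2(\B_\bullet G, \cK_2) \longrightarrow \mH^1(G, \cK_2) \longrightarrow 0,
\]
whose right-hand term is the group of $W$-invariant quadratic forms on $Y_{der}$ by the earlier computation. I would then compare this with the restriction sequence
\[
0 \longrightarrow \Sym^2(X_0) \longrightarrow Q(Y)^W \longrightarrow Q(Y_{der})^W \longrightarrow 0,
\]
where the kernel consists of $W$-invariant quadratic forms on $Y$ vanishing on $Y_{der}$: using the identity $(s_i - 1)y = -\alpha_i(y)\alpha_i^\vee$ together with $W$-invariance of the associated bilinear form $B$, one obtains $2B(y, \alpha_i^\vee) = \alpha_i(y) B(\alpha_i^\vee, \alpha_i^\vee) = 2\alpha_i(y) q(\alpha_i^\vee) = 0$, so $B$ pairs $Y$ trivially with $Y_{der}$ and $q$ factors through $Y/Y_{der} = Y_0$. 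A map between the two sequences comes from the pullback $\mH^2(\B_\bullet G, \cK_2) \to \mH^2(\B_\bullet H, \cK_2) = \Sym^2(X)$ along the maximal torus inclusion, which lands in $W$-invariants because conjugation by $W$ on $G$ is inner and therefore acts trivially on the cohomology of $\B_\bullet G$. A five-lemma argument then closes the identification.

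The main obstacle is surjectivity of the restriction $Q(Y)^W \to Q(Y_{der})^W$. Over $\mathbf{Q}$ the $W$-module decomposition $Y \otimes \mathbf{Q} = (Y_{der} \otimes \mathbf{Q}) \oplus (Y_0 \otimes \mathbf{Q})$ lets one extend any form by zero on the invariant complement, but the resulting $\mathbf{Q}$-valued extension may fail to be $\Z$-valued on $Y$ because the integral and $W$-equivariant rational splittings of $Y$ need not agree. The remedy is to correct the extension by a $\Z$-valued quadratic form supported on $Y_0$ chosen to clear denominators; checking that such a correction always exists is the combinatorial core of the argument.
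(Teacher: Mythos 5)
Your computation of the $E_2$ page and the degeneration argument coincide with the paper's: the $q=1$ row is $\tilde{\mC}^\bullet(S^1,\Z)\otimes\mH^1(G,\cK_2)$ because the absence of $W$-invariant linear forms on $Y_{der}$ forces the K\"unneth maps $\mH^1(G,\cK_2)^{\oplus n}\to\mH^1(G^{\times n},\cK_2)$ to be isomorphisms, and the $q=0$ row is $\mH^0(\B_\bullet H_0,\cK_2)$ with cohomology $\Sym^m(X_0)\otimes\K_{2-m}(k)$ by Esnault--Kahn--Levine--Viehweg. Where you diverge is the last step: the paper does not resolve the extension $0\to\Sym^2(X_0)\to\mH^2(\B_\bullet G,\cK_2)\to\mH^1(G,\cK_2)\to 0$ itself but cites \cite[Theorem 4.7]{BrD}, whereas you attempt a direct identification with $W$-invariant quadratic forms on $Y$ via the restriction sequence and the five lemma. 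Your description of the kernel of $Q(Y)^W\to Q(Y_{der})^W$ as forms pulled back from $Y_0$ is correct.

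However, there is a genuine gap: you explicitly defer the surjectivity of $Q(Y)^W\to Q(Y_{der})^W$, calling it ``the combinatorial core,'' and without it your bottom row is not a short exact sequence and the five lemma does not apply. The gap is fillable along the lines you sketch: $W$-invariance gives $B(y,\alpha_i^\vee)=\alpha_i(y)\,q(\alpha_i^\vee)\in\Z$ for every $y\in Y$, so the rational extension of $q$ by zero along $(Y\otimes\mathbf{Q})^W$ takes values in $\Q$ that modulo $\Z$ depend only on the image of $y$ in $Y_0$; subtracting a rational quadratic form pulled back from $Y_0$ that lifts this $\Q/\Z$-valued form produces an integral $W$-invariant extension. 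You should either carry this out or observe that surjectivity comes for free from the spectral sequence, since $\mH^2(\B_\bullet G,\cK_2)$ surjects onto $E_\infty^{1,1}=Q(Y_{der})^W$ and your comparison map factors this surjection through $Q(Y)^W$. Either way, you also assert without proof the commutativity of the comparison square, i.e.\ that the edge map $\mH^2(\B_\bullet G,\cK_2)\to E_\infty^{1,1}\cong Q(Y_{der})^W$ agrees with restricting to $Y_{der}$ the quadratic form obtained by pulling back to $\B_\bullet H$; this compatibility is exactly the content of the Brylinski--Deligne analysis that the paper outsources, and it is needed for both the injectivity and the surjectivity halves of your five-lemma argument.
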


\subsection{Central extensions of groups by $\cK_3$}

Let us repeat the arguments for the cohomology $\mH^\bullet(\B_\bullet G, \cK_3)$. In this case the $E_1$ page of the spectral sequence is
\[
\xymatrix@R-1pc@C-1pc{
\mH^3(\pt, \cK_3) \ar[r] & \mH^3(G, \cK_3) \ar[r] & \mH^3(G\times G, \cK_3) \ar[r] & ... \\
\mH^2(\pt, \cK_3) \ar[r] & \mH^2(G, \cK_3) \ar[r] & \mH^2(G\times G, \cK_3) \ar[r] & ...\\
\mH^1(\pt, \cK_3) \ar[r] & \mH^1(G, \cK_3) \ar[r] & \mH^1(G\times G, \cK_3) \ar[r] & ...\\
\mH^0(\pt, \cK_3) \ar[r] & \mH^0(H_0, \cK_3) \ar[r] & \mH^0(H_0\times T_0, \cK_3) \ar[r] & ...
}
\]

Since there are no linear $W$-invariant forms on $Y_{der}$, the $\mH^2$ row is additive, i.e.
\[E_1^{\bullet, 2} = \tilde{\mC}^\bullet(S^1, \Z)\otimes \mH^2(G, \cK_3).\]

The number of $\BDG$ factors in the decomposition of $G$ is also obviously additive, so the same complex occurs on the third row.

The first row splits as a direct sum of an additive complex representing $W$-invariant quadratic forms on $Y_{der}$ with values in $k^\times$ and a complex representing quadratic-linear forms on $Y_{der}$ and $Y_0$.

The complex computing quadratic-linear forms is quasi-isomorphic to the tensor product of two additive complexes by the Eilenberg-Zilber map. Therefore, it has cohomology concentrated in degree 2.

Therefore, the $E_2$ page of the spectral sequence is
\[
\xymatrix@R-2pc@C-1pc{
0 & \mH^3(G, \cK_3) & 0 & 0 \\
0 & \mH^2(G, \cK_3) & 0 & 0 \\
0 & E_2^{1, 1} & E_2^{2, 1} & 0 \\
\K_3(k) & X_0\otimes \K_2(k) & \Sym^2(X_0)\otimes k^\times & \Sym^3(X_0)
}
\]

There is a possible $E_2$ differential
\[E_2^{1, 1}\rightarrow \Sym^3(X_0).\]

To see that it is zero, consider the pullback map
\[\mH^\bullet(\B_\bullet H_0, \cK_3)\rightarrow \mH^\bullet(\B_\bullet G, \cK_3).\]

If a $\cK_3$-multiplicative gerbe is trivial on $G$, it is necessarily trivial on $H_0$ since $G\rightarrow H_0$ is surjective. Therefore, the pullback map is injective. In particular, $\Sym^3(X_0)\rightarrow E_\infty^{3, 0}$ is injective. But since $E_\infty^{3, 0}$ is a quotient of $\Sym^3(X_0)$, the $E_2$ differential vanishes and the spectral sequence degenerates at $E_2$.

\begin{thm}
The cohomology groups $\mH^\bullet(\B_\bullet G, \cK_3)$ are given as follows:
\begin{itemize}
\item $\mH^0(\B_\bullet G, \cK_3)\cong \K_3(k)$,

\item $\mH^1(\B_\bullet G, \cK_3)$ is isomorphic to the group of $W\!\text{-invariant}$ linear forms on the cocharacter lattice valued in $\K_2(k)$

\item $\mH^2(\B_\bullet G, \cK_3)$ is isomorphic to the group of $W\!\text{-invariant}$ quadratic forms on the cocharacter lattice valued in $k^\times$,

\item $\mH^3(\B_\bullet G, \cK_3)$ is isomorphic to the group of $W\!\text{-invariant}$ cubic forms on the cocharacter lattice,

\item $\mH^4(\B_\bullet G, \cK_3)$ is isomorphic to $(\Z/2\Z)^{n_{\BDG}}$.
\end{itemize}

The other cohomology groups vanish.

\label{thm:main}
\end{thm}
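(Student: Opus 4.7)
The plan is to run the double-complex spectral sequence $E_1^{p, q} = \mH^q(G^{\times p}, \cK_3)\Rightarrow \mH^{p+q}(\B_\bullet G, \cK_3)$, read off the $E_2$ page from the $\cK_3$-cohomology of $G$ already computed in the previous section, verify that the unique potentially nontrivial higher differential vanishes, and finish by checking that the resulting filtration splits on each diagonal.

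I would begin by identifying the rows of the $E_1$ page. The bottom row is pulled back from $H_0$ via $\mH^0(G, \cK_3)\cong \mH^0(H_0, \cK_3)$ and coincides with the coface complex $\mH^0(\B_\bullet H_0, \cK_3)$, whose cohomology by \cite[Theorem 4.6]{EKLV} is $\Sym^m(X_0)\otimes \K_{3-m}(k)$ in degree $m$. Both $W$-invariant cubic forms on $Y_{der}$ and the $\BDG$-count depend additively on the simple factors of $G$, so the rows $q = 2, 3$ satisfy $\mH^q(G^{\times n}, \cK_3) = \mH^q(G, \cK_3)^{\oplus n}$ and their cohomology, computed as that of $\tilde{\mC}^\bullet(S^1, \Z)\otimes \mH^q(G, \cK_3)$, sits in $p = 1$. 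The $q = 1$ row splits as a direct sum of an additive complex for $W$-invariant quadratic forms on $Y_{der}$ valued in $k^\times$, contributing in $p = 1$, and a complex for quadratic-linear forms on $Y_{der}\otimes Y_0$ which by the Eilenberg--Zilber shuffle is quasi-isomorphic to the tensor product of two reduced complexes and hence contributes in $p = 2$. This yields the $E_2$ page displayed in the text.

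The main obstacle is showing that the unique potentially nontrivial higher differential $d_2\colon E_2^{1, 1}\rightarrow E_2^{3, 0} = \Sym^3(X_0)$ vanishes. I would compare with the spectral sequence for $H_0$, which degenerates at $E_1$ since tori have vanishing higher $\cK$-cohomology; hence $\mH^3(\B_\bullet H_0, \cK_3) = \Sym^3(X_0)$ sits entirely in bidegree $(3, 0)$, and the pullback $\pi^*\colon \Sym^3(X_0)\to \mH^3(\B_\bullet G, \cK_3)$ factors through the lowest filtration piece as $\Sym^3(X_0)\twoheadrightarrow E_\infty^{3, 0}\hookrightarrow \mH^3(\B_\bullet G, \cK_3)$. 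Composing further with restriction to a maximal torus $H\subset G$ identifies the overall map with the injective inclusion $\Sym^3(X_0)\hookrightarrow \Sym^3(X) = \mH^3(\B_\bullet H, \cK_3)$ induced by $X_0\hookrightarrow X$, so $\pi^*$ is injective and $d_2 = 0$.

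With the spectral sequence degenerating at $E_2$, the cohomology is read off the diagonals, and it remains to verify that the extensions split. The vanishing of $W$-invariant linear forms on $Y_{der}$ implies there are no mixed $W$-invariant forms on $Y = Y_{der} + Y_0$ that are linear in a $Y_{der}$ factor, so any $W$-invariant form on $Y$ canonically decomposes according to the degrees of its $Y_{der}$ and $Y_0$ parts. In particular at total degree $2$ the pieces $\Sym^2(X_0)\otimes k^\times$ and $W$-invariant quadratic forms on $Y_{der}$ valued in $k^\times$ exhaust the $W$-invariant quadratic forms on $Y$ valued in $k^\times$, and at total degree $3$ the pieces $\Sym^3(X_0)$, quadratic-linear forms on $Y_{der}\otimes Y_0$, and $W$-invariant cubic forms on $Y_{der}$ exhaust the $W$-invariant cubic forms on $Y$. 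The groups in degrees $0$, $1$, and $4$ are immediate from the surviving $E_\infty$ terms.
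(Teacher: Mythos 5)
Your proposal is correct and follows essentially the same route as the paper: the same double-complex spectral sequence $E_1^{p,q}=\mH^q(G^{\times p},\cK_3)$, the same row-by-row analysis (the EKLV computation for the bottom row, additivity for the rows $q=2,3$, and the Eilenberg--Zilber splitting of the $q=1$ row into an additive part and a quadratic-linear part concentrated in $p=2$), and degeneration at $E_2$. The one step you handle differently is the vanishing of $d_2\colon E_2^{1,1}\to \Sym^3(X_0)$: the paper argues that the pullback $\mH^\bullet(\B_\bullet H_0,\cK_3)\to \mH^\bullet(\B_\bullet G,\cK_3)$ is injective because a multiplicative gerbe trivial on $G$ is trivial on $H_0$ (using surjectivity of $G\to H_0$), whereas you compose further with restriction to the maximal torus $H\subset G$ and use that $X_0\hookrightarrow X$ is a split injection (since $X_{der}$ is free), so $\Sym^3(X_0)\hookrightarrow \Sym^3(X)=\mH^3(\B_\bullet H,\cK_3)$; this is equally valid and arguably more concrete. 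Both write-ups treat the extension problem on the diagonals $p+q=2,3$ only at the level of ``the graded pieces add up to the stated group,'' though your closing paragraph at least records the key point that the absence of $W$-invariant linear forms on $Y_{der}$ kills all cross terms in a $W$-invariant form on $Y$.
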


Using the fact that cohomology of the classifying space parametrizes central extensions, we conclude the following facts:
\begin{enumerate}
\item Central extensions of $G$ by $\cK_3$ are parametrized by $W$-invariant quadratic forms on the cocharacter lattice valued in $k^\times$.

\item Gerbal central extensions of $G$ by $\cK_3$ are parametrized by $W$-invariant cubic forms on the cocharacter lattice.
\end{enumerate}

\subsection{Extensions of loop groups}

Let us explain how thus obtained central extensions of groups by $\cK_2$ and $\cK_3$ relate to central extensions of loop and double loop groups. This is an algebraic counterpart \cite[Section 4.3]{KV} of transgression of central extensions to smooth loop groups, see e.g. \cite[Section 6]{Br}. See also \cite{Wal} for an explicit relation between multiplicative gerbes on $G$ and central extensions of the loop group.

For a space $X$ over a field $k$ (i.e. a sheaf over the category of affine schemes over $k$) we denote
\[\L X = \Map(\Spec\ k\llpar t\rrpar, X)\]
the loop space of $X$.

Consider a correspondence
\[
\xymatrix{
& \L X\times D^\times \ar_{\pi}[dl] \ar^{\ev}[dr] & \\
\L X && X
}
\]

Given an element $t\in \mH^n(X, \cK_m)$ its pullback $\ev^* t$ defines an element of $\mH^n(\L X\times D^\times, \cK_m)$.

The residue map on $\K$-theory defines a map
\[\pi_*\colon\mH^n(\L X\times D^\times, \cK_m)\rightarrow \mH^n(\L X, \cK_{m-1}).\]

Thus, we have defined a transgression map
\[\pi_* \ev^*\colon \mH^n(X, \cK_m)\rightarrow \mH^n(\L X, \cK_{m-1})\]
which sends an $(n-1)$-gerbe over $\cK_m$ on $X$ to an $(n-1)$-gerbe over $\cK_{m-1}$ on $\L X$.

This behaves well with respect to multiplicative structures. Indeed,
\[\L\B_\bullet G\cong \B_\bullet \L G.\]

Therefore, we get a map
\[\mH^{n+1}(\B_\bullet G, \cK_m)\rightarrow \mH^{n+1}(\B_\bullet \L G, \cK_{m-1})\]
which sends multiplicative $(n-1)$-gerbe over $\cK_m$ on $G$ to a multiplicative $(n-1)$-gerbe over $\cK_{m-1}$ on $\L G$.

For instance, a multiplicative $\cK_2$-torsor on $G$ gives rise to a $\G_m$-central extension of $\L G$. In this way we get a central extension of $\L G$ by $\G_m$ for any $W$-invariant quadratic form on the cocharacter lattice.

The results of this paper produce a $\B\G_m$-central extension of $\L^2G\equiv \L\L G$, the double loop group, given any $W$-invariant cubic form on the cocharacter lattice. Similarly, we get a $\G_m$-central extension of $\L^2 G$ given any $W$-invariant quadratic form on the cocharacter lattice valued in $k^\times$. We expect that the gerbal central extension of $\L^2\mathrm{GL}_n$ constructed in \cite{AK} and \cite{FZ} come from the basic $W$-invariant cubic form.

\end{document}